\newtheorem{theorem}{Theorem}[section]
\newtheorem{lemma}[theorem]{Lemma}
\theoremstyle{definition}
\newtheorem{definition}[theorem]{Definition}
\newtheorem{notation}[theorem]{Notation}
\numberwithin{equation}{section}
\begin{document}

\title[Diagonal hypersurfaces and hypergeometric function]
{Number of $\mathbb{F}_q$-points on Diagonal hypersurfaces and hypergeometric function}


 \author{Sulakashna}
\address{Department of Mathematics, Indian Institute of Technology Guwahati, North Guwahati, Guwahati-781039, Assam, INDIA}
\curraddr{}
\email{sulakash@iitg.ac.in}
\author{Rupam Barman}
\address{Department of Mathematics, Indian Institute of Technology Guwahati, North Guwahati, Guwahati-781039, Assam, INDIA}
\curraddr{}
\email{rupam@iitg.ac.in}

\thanks{}


\subjclass[2010]{11G25, 33E50, 11S80, 11T24.}
\date{20th October 2022, version-1}
\keywords{character sum; hypergeometric series; $p$-adic gamma function; diagonal hypersurfaces.}
\begin{abstract} 
Let $D_\lambda^d$ denote the family of monomial deformations of diagonal hypersurface over a finite field $\mathbb{F}_q$ given by
\begin{align*}
	D_\lambda^d: X_1^d+X_2^d+\cdots+X_n^d=\lambda d X_1^{h_1}X_2^{h_2}\cdots X_n^{h_n},
\end{align*} 
where $d,n\geq2$, $h_i\geq1$, $\sum_{i=1}^n h_i=d$, and $\gcd(d,h_1,h_2,\ldots, h_n)=1$.  The Dwork hypersurface is the case when $d=n$, that is, $h_1=h_2=\cdots =h_n=1$. 
Formulas for the number of  $\mathbb{F}_q$-points on the Dwork hypersurfaces in terms of McCarthy's $p$-adic hypergeometric functions are known. 
In this article we provide a formula for the number of $\mathbb{F}_q$-points on $D_\lambda^d$ in terms of McCarthy's $p$-adic hypergeometric function which holds for $d\geq n$. 
\end{abstract}
\maketitle
\section{Introduction and statement of results}
For an odd prime $p$, let $\mathbb{F}_q$ denote the finite field with $q$ elements, where $q=p^r, r\geq 1$. 
Let $D_\lambda^d$ denote the family of \emph{monomial deformations of diagonal hypersurface} over a finite field $\mathbb{F}_q$. These families are of the form:
\begin{align}\label{diagonal}
D_\lambda^d: X_1^d+X_2^d+\cdots+X_n^d=\lambda d X_1^{h_1}X_2^{h_2}\cdots X_n^{h_n},
\end{align} 
where $d,n\geq2$, $h_i\geq1$, $\sum_{i=1}^n h_i=d$, and $\gcd(d,h_1,h_2,\ldots, h_n)=1$. Let $\#D_{\lambda}^d(\mathbb{F}_q)$ denote the number of points on the hypersurface \eqref{diagonal} in $\mathbb{P}^{n-1}(\mathbb{F}_q)$. 
The Dwork hypersurface is the case when $d=n$, that is, $h_1=h_2=\cdots =h_n=1$. 
\par Let $d$ be an odd integer, and let $q\equiv 1\pmod{d}$. In \cite{goodson1}, Goodson expressed the number of points on the Dwork hypersurface over $\mathbb{F}_q$ as $(q^{d-1}-1)/(q-1)$ 
plus a sum of finite field hypergeometric functions as defined by Greene \cite{greene}. Furthermore, she conjectured that the same is true for $d$ even (see for example \cite[Conjecture 1.3]{goodson1}) 
which is open to date. On the other hand, formulas for the number of points on the Dwork hypersurface over a finite field $\mathbb{F}_q$ in terms of McCarthy's $p$-adic hypergeometric function $G[\cdots]_q$ (see Definition \ref{defin1} 
below) are known in the general case (see for example \cite{BRS, goodson, mccarthy3}).  
\par The aim of this paper is to find a formula for the number of $\mathbb{F}_q$-points on the hypersurface \eqref{diagonal} which holds for $d>n$ as well (the non Dwork case). In \cite{kob2}, Koblitz expressed $\#D_{\lambda}^d(\mathbb{F}_q)$ 
in terms of Gauss and Jacobi sums under the condition that $q\equiv 1\pmod{d}$. Using Koblitz's formula, Salerno \cite{salerno} expressed $\#D_{\lambda}^d(\mathbb{F}_q)$ as a sum of hypergeometric functions over finite fields as defined by Katz \cite{katz}, 
under the condition that $dh_1\cdots h_n \mid q-1$. This problem has not been studied to date when $q\not \equiv 1\pmod{d}$ and $d>n$. In this article we find a formula for $\#D_{\lambda}^d(\mathbb{F}_q)$ in terms of McCarthy's $p$-adic hypergeometric function when $\gcd(d, q-1)=1$. Before we state our main results, we define rational numbers $b_1, b_2, \ldots, b_{d-1}$ as follows. 
\begin{notation}\label{notation} We have two cases according to $d=n$ or $d>n$.
\begin{itemize}
\item ($d=n$) In this case $h_1=\cdots=h_n=1$. We take $b_1=b_2= \cdots =b_{d-1}=0$. 
\item $(d>n)$ In this case one or more $h_i$ is greater than $1$. We take $b_1= b_2= \cdots =b_{n-1}=0$; and corresponding to each $h_j>1$, we take $b_{j_1}=\frac{1}{h_j}, b_{j_2}=\frac{2}{h_j}, \ldots, b_{j_{h_j-1}}=\frac{h_j-1}{h_j}$ 
contributing to the remaining $d-n$ values of $b_i$ $(i=n, n+1, \ldots, d-1)$. For example, if $h_1=h_3=1, h_2=2, h_4=3$, then $n=4$, $d=h_1+h_2+h_3+h_4=7$, and hence $b_1= b_2=b_3=0$, $b_4=\frac{1}{2}$ (corresponding to $h_2$), and $b_5=\frac{1}{3}, b_6=\frac{2}{3}$ (corresponding to $h_4$).
\end{itemize}
\end{notation}
We now state our first main theorem.
\begin{theorem}\label{MT-1}
Let $p$ be an odd prime and $q=p^r, \ r\geq 1$. Let $d\geq2$ be an integer such that $\gcd(d,q-1)=1$. If $D_\lambda^d$ is the diagonal hypersurface given by \eqref{diagonal} such that $\lambda \neq 0$ and $p\nmid dh_1\cdots h_n$, 
then the number of points on $D_\lambda^d$ in $\mathbb{P}^{n-1}(\mathbb{F}_q)$ is given by
\begin{align*}
\#D_\lambda^d(\mathbb{F}_q) = \frac{q^{n-1}-1}{q-1} +(-1)^{n}\cdot {_{d-1}}G_{d-1}\left[\begin{array}{cccc}
	\frac{1}{d}, & \hspace{-.2cm} \frac{2}{d}, & \hspace{-.2cm} \ldots, & \hspace{-.2cm} \frac{d-1}{d}\vspace{.1cm} \\
	b_1, & \hspace{-.2cm} b_2, &\hspace{-.2cm}  \ldots, & \hspace{-.2cm} b_{d-1}
	\end{array}|\lambda^d h_1^{h_1}\cdots h_n^{h_n}
	\right]_q,
\end{align*}
where the $b_i$'s are as given in Notation \ref{notation}.
\end{theorem}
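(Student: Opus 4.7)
The plan is to count the points directly via the standard additive-character machinery and then translate the resulting Gauss-sum expression into McCarthy's $p$-adic hypergeometric form. I would first pass to the affine cone via $\#D_\lambda^d(\mathbb{F}_q) = \frac{\widetilde{N} - 1}{q-1}$, where $\widetilde{N}$ is the number of affine solutions, and write
\[
\widetilde{N} = \frac{1}{q}\sum_{t \in \mathbb{F}_q} \sum_{X_1,\ldots,X_n \in \mathbb{F}_q} \theta\Bigl(t\bigl(X_1^d + \cdots + X_n^d - \lambda d X_1^{h_1}\cdots X_n^{h_n}\bigr)\Bigr)
\]
for a fixed nontrivial additive character $\theta$ of $\mathbb{F}_q$. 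The $t = 0$ term contributes $q^{n-1}$ to $\widetilde{N}$, and hence $\frac{q^{n-1}-1}{q-1}$ to the projective count; the remaining $t \neq 0$ part must be shown to equal the $G$-function term.

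For the $t \neq 0$ contribution, I would expand each $\theta(y)$ with $y \neq 0$ via the identity $\theta(y) = -\frac{1}{q-1}\sum_\chi g(\chi^{-1})\chi(y)$, producing a multi-character sum of Gauss-sum products. The hypothesis $\gcd(d, q-1) = 1$ is crucial here: it makes the character-raising map $\chi \mapsto \chi^d$ a bijection on the character group of $\mathbb{F}_q^\ast$, and this forces the $X_i$-sums, via multiplicative orthogonality, to collapse the independent characters attached to the monomials $X_i^d$ onto a single character parameter $\chi$ inherited from the monomial $\prod X_i^{h_i}$. After executing the $X_i$-sums and the $t$-sum, what remains has the shape $\frac{(-1)^n}{q-1}\sum_\chi \chi\bigl(\lambda^d h_1^{h_1}\cdots h_n^{h_n}\bigr)$ times a ratio of Gauss sums whose parameters involve $\chi^d$ in the numerator and $\chi^{h_j}$ in the denominators; separate care is needed for the degenerate tuples where some $X_i = 0$, but these precisely reconstruct the Fermat-type boundary contribution already absorbed in $\frac{q^{n-1}-1}{q-1}$.

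The decisive step is the translation of this character sum into $_{d-1}G_{d-1}[\cdots]_q$. I would apply the Gross--Koblitz formula to exchange Gauss sums for $p$-adic gamma factors, and then invoke the $p$-adic analogue of the Gauss multiplication formula $\Gamma(mz) = m^{mz-1/2}(2\pi)^{(1-m)/2}\prod_{k=0}^{m-1}\Gamma(z + k/m)$ separately for $m = d$ in the numerator and $m = h_j$ in each denominator. The $m = d$ application produces exactly the top parameters $\frac{1}{d},\frac{2}{d},\ldots,\frac{d-1}{d}$, and each $h_j > 1$ application produces precisely the bottom parameters $\frac{1}{h_j},\ldots,\frac{h_j-1}{h_j}$ prescribed in Notation \ref{notation}; the powers $d^{d}$ and $h_j^{-h_j}$ spilling out of the multiplication formula, together with $\lambda^d$, assemble into the argument $\lambda^d h_1^{h_1}\cdots h_n^{h_n}$ of the $G$-function. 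Reindexing by the Teichm\"uller integer $j \in \{0, 1, \ldots, q-2\}$ and matching against Definition \ref{defin1} then completes the identification.

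The main obstacle in implementing this outline is bookkeeping: correctly tracking signs, the Teichm\"uller factor $\omega(-1)^\bullet$, and the floor-function shifts built into McCarthy's definition, while verifying that the formula degenerates correctly when $d = n$ (every $h_j = 1$, all $b_i = 0$) to recover the known Dwork case. The condition $p \nmid dh_1\cdots h_n$ is exactly what is needed so that all the $p$-adic gamma values arising from the multiplication formula are units, keeping every manipulation well-defined.
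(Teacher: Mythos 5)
Your outline follows essentially the same route as the paper: pass to the affine cone, expand the additive character sum in Gauss sums, use $\gcd(d,q-1)=1$ to collapse the character parameters via orthogonality, and convert to McCarthy's function through Gross--Koblitz and the $p$-adic Gauss multiplication formula (the paper's Lemmas \ref{lemma3_1} and \ref{lemma3_2}), with the floor-function bookkeeping you correctly flag as the delicate part handled there by Lemmas \ref{lemma-3.2}--\ref{lemma-3.4}. The only imprecision is your remark that the degenerate tuples are ``absorbed in $\frac{q^{n-1}-1}{q-1}$'': in the paper that boundary sum is instead shown to equal $-B$ for the Fermat form $f_1$ (using that its total count is $q^{n-1}$), and it cancels against the constant terms of $A$ rather than against the $t=0$ contribution, but the underlying idea is the same.
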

If $d$ is a prime, then the condition $\gcd(d,q-1)=1$ is equivalent to $q\not \equiv 1\pmod{d}$. The condition $\gcd(d,q-1)=1$ is very crucial in the proof of Theorem \ref{MT-1}. Finding a formula for $\#D_{\lambda}^d(\mathbb{F}_q)$ in terms of 
McCarthy's $p$-adic hypergeometric function without assuming the condition $\gcd(d, q-1)=1$ seems to be a difficult problem. In the next theorem, we find such a formula without assuming the condition $\gcd(d, q-1)=1$ for a particular family of diagonal hypersurfaces, namely
\begin{align}\label{eq-4.0}
D_\lambda^{d,k}:x_1^d+x_2^d=d\lambda x_1^k x_2^{d-k},
\end{align}
where $d\geq 2$, $k\geq 1$, and $\gcd(d,k,d-k)=1$. The second author with Saikia \cite{BS5} expressed the number of points on \eqref{eq-4.0} over $\mathbb{F}_p$ in terms of McCarthy's $p$-adic hypergeometric function when $k=1$. 
In the following theorem, we express the number of points on the hypersurface \eqref{eq-4.0} over $\mathbb{F}_q$ for any $k\geq 1$ and without assuming the condition $\gcd(d,q-1)=1$.
\begin{theorem}\label{MT-2}
	Let $p$ be an odd prime and $q=p^r, \ r\geq 1$. Let $d\geq2$ and $k\geq 1$ be integers, and let $D_\lambda^{d,k}$ be the diagonal hypersurface given in \eqref{eq-4.0} such that $p\nmid dk(d-k)$. 
	Then, for $\lambda \neq 0$, the number of points on $D_\lambda^{d,k}$ in $\mathbb{P}^{1}(\mathbb{F}_q)$ is given by
	\begin{align*}
	\#D_\lambda^{d,k}(\mathbb{F}_q) = 1 + {_{d-1}}G_{d-1}\left[\begin{array}{cccc}
	\frac{1}{d}, &\hspace{-.2cm} \frac{2}{d}, &\hspace{-.2cm} \ldots, &\hspace{-.2cm} \frac{d-1}{d}\vspace{.1cm} \\
	b_1, &\hspace{-.2cm} b_2, &\hspace{-.2cm}  \ldots, &\hspace{-.2cm} b_{d-1}
	\end{array}|\lambda^d k^{k} (d-k)^{d-k}
	\right]_q,
	\end{align*}
	where $b_i$'s are as given in Notation \ref{notation} with $h_1=k$ and $h_2=d-k$.
\end{theorem}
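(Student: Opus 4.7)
The plan is to mimic the strategy for the Dwork-type case but to exploit the hypothesis $\gcd(d,k,d-k)=1$---equivalent to $\gcd(d,k)=1$---in place of $\gcd(d,q-1)=1$. Because $1\le k\le d-1$, any $\mathbb{F}_q$-point on $D_\lambda^{d,k}$ has both coordinates nonzero, so setting $t=x_1/x_2$ reduces the count to
\[
\#D_\lambda^{d,k}(\mathbb{F}_q)=\#\{t\in\mathbb{F}_q^\times:\ t^d-d\lambda t^k+1=0\}.
\]
Applying the identity $q\cdot\mathbf{1}[c=0]=\sum_y\psi(yc)$, isolating the $y=0$ contribution, and then factoring $\psi(y(t^d-d\lambda t^k+1))$ as a product of three additive characters---each with nonzero argument, since $\lambda\neq 0$ and $p\nmid dk(d-k)$---produces a triple sum to which the Fourier inversion $\psi(a)=\tfrac{1}{q-1}\sum_\chi g(\overline\chi)\chi(a)$ can be applied term by term.

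Carrying out the orthogonality relations in $y$ and $t$ collapses the triple character sum to the single condition $\chi_1^d\chi_2^k=\varepsilon$ (with $\chi_3=\overline{\chi_1\chi_2}$). This is the critical point. Since $\gcd(d,k)=1$, a B\'ezout pair $ud+vk=1$ shows that $\eta\mapsto(\eta^k,\eta^{-d})$ is a bijection from the character group of $\mathbb{F}_q^\times$ onto the solution set of $\chi_1^d\chi_2^k=\varepsilon$, with inverse $(\chi_1,\chi_2)\mapsto\chi_1^u\chi_2^{-v}$. This bijection holds unconditionally in $q$---the argument never requires $d$ to be invertible modulo $q-1$, which is why the hypothesis $\gcd(d,q-1)=1$ is not needed here. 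After reparametrisation, the count becomes
\[
\#D_\lambda^{d,k}(\mathbb{F}_q)=\frac{q-1}{q}+\frac{1}{q(q-1)}\sum_{\eta} g(\eta^{-k})\,g(\eta^d)\,g(\eta^{k-d})\,\eta^{-d}(-d\lambda),
\]
where the sum runs over all characters of $\mathbb{F}_q^\times$.

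It remains to match this Gauss-sum expression with McCarthy's ${}_{d-1}G_{d-1}[\cdots]_q$. Writing $\eta=T^j$ and invoking Gross--Koblitz converts each of $g(T^{-jk})$, $g(T^{jd})$, $g(T^{j(k-d)})$ into a product of $\Gamma_p$-values at arguments of the form $\{jap^i/(q-1)\}$ for $a\in\{-k,d,k-d\}$, together with explicit powers of a uniformiser $\pi$ with $\pi^{p-1}=-p$. The $p$-adic Gauss multiplication (Hasse--Davenport-type) formula for $\Gamma_p$, applied separately with moduli $d$, $k$, and $d-k$, then splits each $\Gamma_p$ at a multiple of one of these integers into a product of $\Gamma_p$ at shifts $i/d$, $i/k$, and $i/(d-k)$ respectively. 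The $d$-split produces the upstairs parameters $\tfrac{1}{d},\ldots,\tfrac{d-1}{d}$; the $k$- and $(d-k)$-splits produce the downstairs parameters $b_i$ of Notation~\ref{notation} (with $h_1=k$ and $h_2=d-k$); and the Teichm\"uller constants pulled out of the multiplication formulas combine with $\eta^{-d}(-d\lambda)$ to deliver the hypergeometric argument $\lambda^d k^k(d-k)^{d-k}$.

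The main obstacle is the bookkeeping in this final step. The powers of $\pi$ from Gross--Koblitz, the $(-1)^{j(d-1)}$-type signs needed to align with the $(-1)^{jn}$ template in the definition of ${}_{d-1}G_{d-1}[\cdots]_q$, and the three independent Teichm\"uller contributions from the multiplication formula must all cancel in the right way. In particular, the trivial-character term (where each Gauss sum equals $-1$) should combine with the additive $(q-1)/q$ and the $j=0$ term of the $G$-function to yield the summand $1$ appearing in the theorem, leaving the rest of the sum to equal ${}_{d-1}G_{d-1}[\cdots]_q$ evaluated at $\lambda^d k^k(d-k)^{d-k}$.
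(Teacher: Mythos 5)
Your proposal is correct in outline and reaches the same Gauss-sum identity as the paper, but by a genuinely different and in places cleaner route for the part that actually distinguishes Theorem \ref{MT-2} from Theorem \ref{MT-1}. The paper works with the affine count in $\mathbb{A}^2$ together with the auxiliary variable $z$, computes a boundary contribution $B_1=-(q-1)$ from the locus where some $x_i=0$, and then must solve the congruence $km\equiv (d-k)l\pmod{q-1}$; it does so by a fairly laborious explicit analysis involving $k_0=\gcd(q-1,k)$, a count of $q-1$ solutions, and a separate verification that the parametrization $l\equiv ka$, $m\equiv (d-k)a$ is non-repeating. You instead observe that no projective point has a vanishing coordinate, dehomogenize to $t^d-d\lambda t^k+1=0$ with $t\in\mathbb{F}_q^\times$, and reduce the character condition to $\chi_1^d\chi_2^k=\varepsilon$, which you resolve by a B\'ezout/group-homomorphism argument using $\gcd(d,k)=1$: the map $(\chi_1,\chi_2)\mapsto\chi_1^d\chi_2^k$ is surjective, so its kernel has order $q-1$ and is exhausted by the injective parametrization $\eta\mapsto(\eta^k,\eta^{-d})$. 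This eliminates the boundary-term bookkeeping and replaces the paper's hands-on congruence analysis with a one-line structural argument; I checked that your expression $\frac{q-1}{q}+\frac{1}{q(q-1)}\sum_\eta g(\eta^{-k})g(\eta^d)g(\eta^{k-d})\eta^{-d}(-d\lambda)$ agrees exactly with the paper's $\#D=1+G$ after inserting $A_1=q-1+q(q-1)G$. Two caveats: your stated explicit inverse $(\chi_1,\chi_2)\mapsto\chi_1^u\chi_2^{-v}$ has the B\'ezout coefficients misplaced (one needs $ku+dv=1$ for that formula, not $ud+vk=1$), though injectivity plus the cardinality count already gives the bijection; and the final conversion of the Gauss-sum expression to ${}_{d-1}G_{d-1}[\cdots]_q$ via Gross--Koblitz and the $\Gamma_p$-multiplication formulas with moduli $d$, $k$, $d-k$ is only sketched --- but this is exactly the computation the paper itself defers to the proof of Theorem \ref{MT-1}, and your description of which pieces produce the upper parameters, the lower parameters, and the argument $\lambda^dk^k(d-k)^{d-k}$ is accurate.
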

\section{Preliminaries}
For an odd prime $p$, let $\mathbb{F}_q$ denote the finite field with $q$ elements, where $q=p^r, r\geq 1$. Let $\widehat{\mathbb{F}_q^{\times}}$ be the group of all multiplicative
characters on $\mathbb{F}_q^{\times}$. We extend the domain of each $\chi\in \widehat{\mathbb{F}_q^{\times}}$ to $\mathbb{F}_q$ by setting $\chi(0):=0$
including the trivial character $\varepsilon$. We denote by $\overline{\chi}$ the character inverse of $\chi$. The orthogonality relation satisfied by multiplicative characters is given as follows:
\begin{align}\label{eq-1}
	\sum\limits_{x\in {\mathbb{F}_q }} \chi(x)= \left\{
\begin{array}{ll}
q-1, & \hbox{if $\chi=\varepsilon$;} \\
0, & \hbox{otherwise.}
\end{array}
\right.
\end{align}
Let $\mathbb{Z}_p$ and $\mathbb{Q}_p$ denote the ring of $p$-adic integers and the field of $p$-adic numbers, respectively.
Let $\overline{\mathbb{Q}_p}$ be the algebraic closure of $\mathbb{Q}_p$ and $\mathbb{C}_p$ the completion of $\overline{\mathbb{Q}_p}$.
Let $\mathbb{Z}_q$ be the ring of integers in the unique unramified extension of $\mathbb{Q}_p$ with residue field $\mathbb{F}_q$.
We know that $\chi\in \widehat{\mathbb{F}_q^{\times}}$ takes values in $\mu_{q-1}$, where $\mu_{q-1}$ is the group of
$(q-1)$-th roots of unity in $\mathbb{C}^{\times}$. Since $\mathbb{Z}_q^{\times}$ contains all $(q-1)$-th roots of unity,
we can consider multiplicative characters on $\mathbb{F}_q^\times$
to be maps $\chi: \mathbb{F}_q^{\times} \rightarrow \mathbb{Z}_q^{\times}$.
Let $\omega: \mathbb{F}_q^\times \rightarrow \mathbb{Z}_q^{\times}$ be the Teichm\"{u}ller character.
For $a\in\mathbb{F}_q^\times$, the value $\omega(a)$ is just the $(q-1)$-th root of unity in $\mathbb{Z}_q$ such that $\omega(a)\equiv a \pmod{p}$.
\par Next, we introduce the Gauss sum and recall some of its elementary properties. For further details, see \cite{evans}. Let $\zeta_p$ be a fixed primitive $p$-th root of unity
in $\overline{\mathbb{Q}_p}$. The trace map $\text{tr}: \mathbb{F}_q \rightarrow \mathbb{F}_p$ is given by
\begin{align}
\text{tr}(\alpha)=\alpha + \alpha^p + \alpha^{p^2}+ \cdots + \alpha^{p^{r-1}}.\notag
\end{align}
Then the additive character
$\theta: \mathbb{F}_q \rightarrow \mathbb{Q}_p(\zeta_p)$ is defined by
\begin{align}
\theta(\alpha)=\zeta_p^{\text{tr}(\alpha)}.\notag
\end{align}
For $\chi \in \widehat{\mathbb{F}_q^\times}$, the \emph{Gauss sum} is defined by
\begin{align}
g(\chi):=\sum\limits_{x\in \mathbb{F}_q}\chi(x)\theta(x) .\notag
\end{align}
We now fix a generator of $\widehat{\mathbb{F}_q^{\times}}$ which we denote by $T$.
\begin{lemma}\emph{(\cite[Lemma 2.2]{Fuselier}).}\label{lemma2_2} For $\alpha\in \mathbb{F}_q$, we have
	\begin{align}
		\theta(\alpha)=\frac{1}{q-1}\sum_{m=0}^{q-2} g(T^{-m})T^m(\alpha).\notag
	\end{align}
\end{lemma}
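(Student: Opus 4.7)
The lemma is a Fourier-type inversion formula that expresses the additive character $\theta$ in terms of the $q-1$ multiplicative characters $\{T^m\}_{m=0}^{q-2}$, with Gauss sums as the Fourier coefficients. My plan is to verify it by substituting the definition of $g(T^{-m})$ into the right-hand side, interchanging the two summations, and then collapsing the inner sum by applying the orthogonality relation \eqref{eq-1} in its dual form (orthogonality over characters at a fixed group element).

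Concretely, I would expand $g(T^{-m})=\sum_{x\in\mathbb{F}_q^\times}T^{-m}(x)\theta(x)$ (the $x=0$ term drops because $T^{-m}(0)=0$), interchange the order of summation, and collect the characters together to obtain
\begin{align*}
\frac{1}{q-1}\sum_{m=0}^{q-2}g(T^{-m})T^m(\alpha)
=\frac{1}{q-1}\sum_{x\in\mathbb{F}_q^\times}\theta(x)\sum_{m=0}^{q-2}T^m(\alpha x^{-1}).
\end{align*}
For $\alpha\in\mathbb{F}_q^\times$, the argument $\alpha x^{-1}$ is a well-defined element of $\mathbb{F}_q^\times$. Because $T$ generates $\widehat{\mathbb{F}_q^\times}$, the inner sum equals $\sum_{\chi\in\widehat{\mathbb{F}_q^\times}}\chi(\alpha x^{-1})$, which by the dual form of \eqref{eq-1} is $q-1$ if $\alpha x^{-1}=1$ and $0$ otherwise. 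Thus only the term $x=\alpha$ contributes and the right-hand side collapses to $\theta(\alpha)$, as required.

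The main subtlety — more a bookkeeping matter than a genuine obstacle — is the boundary case $\alpha=0$. Under the convention $\chi(0)=0$ for every $\chi$ fixed in the preliminaries, the right-hand side naively vanishes while $\theta(0)=\zeta_p^0=1$. In practice this is reconciled either by restricting the lemma to $\mathbb{F}_q^\times$ (which is all that is needed in the later applications, since the $\alpha=0$ summands contribute trivially when this identity is inserted into point-counts) or by interpreting the $m=0$ term with the separate convention $T^0(0):=1$, so that $g(\varepsilon)=-1$ together with this convention reproduces $\theta(0)=1$. I would flag this point but not dwell on it, since it does not affect the Fourier-inversion heart of the proof.
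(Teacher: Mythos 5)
The paper offers no proof of this lemma (it is quoted from Fuselier), but your argument is the standard one and matches the source's: expand each Gauss sum $g(T^{-m})=\sum_{x\in\mathbb{F}_q^\times}T^{-m}(x)\theta(x)$, interchange the order of summation, and use the dual orthogonality relation $\sum_{m=0}^{q-2}T^m(u)=(q-1)\delta_{u,1}$ for $u\in\mathbb{F}_q^\times$ to collapse the sum to the single term $x=\alpha$. For $\alpha\in\mathbb{F}_q^\times$ this is complete and correct.

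One correction to your handling of the boundary case $\alpha=0$. Your first resolution is the right one: the identity is genuinely false at $\alpha=0$ and must be read as a statement for $\alpha\in\mathbb{F}_q^\times$ (which is how Fuselier states it, and which suffices here, since in the proofs of Theorems \ref{MT-1} and \ref{MT-2} the lemma is only ever applied to arguments of the form $zx_1^d$, $-\lambda dzx_1^{h_1}\cdots x_n^{h_n}$, etc., with all variables ranging over $\mathbb{F}_q^\times$). Your proposed alternative, however, does not work: under the convention $T^0(0):=1$ the $m=0$ term contributes
\begin{align*}
\frac{1}{q-1}\,g(\varepsilon)\,T^0(0)=\frac{-1}{q-1},
\end{align*}
which is not $\theta(0)=1$; no choice of convention for $\chi(0)$ rescues the identity at $\alpha=0$. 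This is a minor slip in a side remark and does not affect the core Fourier-inversion argument, which is sound.
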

Now, we recall the $p$-adic gamma function. For further details, see \cite{kob}.
For a positive integer $n$,
the $p$-adic gamma function $\Gamma_p(n)$ is defined as
\begin{align}
\Gamma_p(n):=(-1)^n\prod\limits_{0<j<n,p\nmid j}j\notag
\end{align}
and one extends it to all $x\in\mathbb{Z}_p$ by setting $\Gamma_p(0):=1$ and
\begin{align}
\Gamma_p(x):=\lim_{x_n\rightarrow x}\Gamma_p(x_n)\notag
\end{align}
for $x\neq0$, where $x_n$ runs through any sequence of positive integers $p$-adically approaching $x$.
This limit exists, is independent of how $x_n$ approaches $x$,
and determines a continuous function on $\mathbb{Z}_p$ with values in $\mathbb{Z}_p^{\times}$.
Let $\pi \in \mathbb{C}_p$ be the fixed root of $x^{p-1} + p=0$ which satisfies
$\pi \equiv \zeta_p-1 \pmod{(\zeta_p-1)^2}$. Then the Gross-Koblitz formula relates Gauss sums and the $p$-adic gamma function as follows.
\begin{theorem}\emph{(\cite[Gross-Koblitz]{gross}).}\label{thm2_3} For $a\in \mathbb{Z}$ and $q=p^r, r\geq 1$, we have
\begin{align}
g(\overline{\omega}^a)=-\pi^{(p-1)\sum\limits_{i=0}^{r-1}\langle\frac{ap^i}{q-1} \rangle}\prod\limits_{i=0}^{r-1}\Gamma_p\left(\left\langle \frac{ap^i}{q-1} \right\rangle\right).\notag
\end{align}
\end{theorem}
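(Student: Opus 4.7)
The Gross--Koblitz formula is a classical theorem of Gross and Koblitz whose standard proof combines Stickelberger's congruence for the $\pi$-adic valuation of a Gauss sum with Dwork's $p$-adic analytic representation of the additive character. The first step is to determine $v_\pi\bigl(g(\overline{\omega}^a)\bigr)$ by Stickelberger's theorem on the prime ideal factorization of Gauss sums inside the cyclotomic ring $\mathbb{Z}[\zeta_{(q-1)p}]$. This yields precisely
$$v_\pi\bigl(g(\overline{\omega}^a)\bigr) = (p-1)\sum_{i=0}^{r-1}\left\langle \frac{ap^i}{q-1}\right\rangle,$$
which already matches the stated power of $\pi$ and reduces the problem to computing the $\pi$-adic unit part $g(\overline{\omega}^a)/\pi^{v_\pi}$.

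The second step is to introduce Dwork's splitting function $F_\pi(X)$, a variant of the Artin--Hasse exponential twisted by $\pi$, which is a $p$-adic power series converging on the closed unit disk and satisfying $F_\pi(1) = \zeta_p$ with the normalization $\pi \equiv \zeta_p - 1 \pmod{(\zeta_p - 1)^2}$ adopted before the theorem statement. Combined with the Teichm\"uller lift $\omega$, the additive character $\theta(x) = \zeta_p^{\mathrm{tr}(x)}$ can be expressed as an explicit $p$-adic analytic function of the Frobenius orbit $\omega(x), \omega(x)^p, \ldots, \omega(x)^{p^{r-1}}$. Substituting this representation into $g(\overline{\omega}^a) = \sum_x \overline{\omega}^a(x)\theta(x)$, interchanging the outer sum with the Taylor expansion of $F_\pi$, and applying the orthogonality relation \eqref{eq-1} to the inner sum over $x$ picks out exactly those terms of the Taylor series whose total exponent is $\equiv a \pmod{q-1}$. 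This reduces the Gauss sum to a specific $p$-adic numerical series in $\pi$ whose leading valuation is already forced by Step~1.

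The third step is to identify this numerical series with $-\prod_{i=0}^{r-1}\Gamma_p\bigl(\langle ap^i/(q-1)\rangle\bigr)$. This relies on Morita's product definition of $\Gamma_p$ as a regularized product of integers coprime to $p$, the functional equation $\Gamma_p(x+1) = -x\,\Gamma_p(x)$ for units $x \in \mathbb{Z}_p^\times$, and Gauss-type multiplication identities for $\Gamma_p$. The main obstacle lies precisely here: the coefficient sum coming out of the Dwork exponential is indexed by compositions tied to the base-$p$ digit expansions of the integers $ap^i \bmod (q-1)$, and untangling this combinatorial structure to recognize the clean product of $\Gamma_p$-values requires careful bookkeeping of cancellations. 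The Stickelberger valuation from Step~1 serves as a $\pi$-adic consistency check throughout, and the sign ``$-$'' in front of the formula is pinned down by the normalization $\pi \equiv \zeta_p - 1 \pmod{(\zeta_p - 1)^2}$.
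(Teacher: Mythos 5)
The paper does not prove this statement at all: it is the Gross--Koblitz formula, imported verbatim from the literature via the citation \cite{gross}, so there is no internal argument of the paper to compare yours against. Judged on its own terms, your outline correctly identifies the classical two-ingredient strategy --- Stickelberger's congruence for the $\pi$-adic valuation, and Dwork's splitting function together with Morita's $\Gamma_p$ for the unit part --- and your valuation claim $v_\pi\bigl(g(\overline{\omega}^a)\bigr)=(p-1)\sum_{i=0}^{r-1}\langle ap^i/(q-1)\rangle$ is the correct restatement of Stickelberger's theorem in terms of the base-$p$ digit sum of $a$ reduced modulo $q-1$.

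As a proof, however, the proposal has a genuine gap exactly where you flag ``the main obstacle.'' Step 1 only pins down the valuation; the entire content of the theorem is the identification of the $\pi$-adic unit $g(\overline{\omega}^a)\,\pi^{-v_\pi}$ with $-\prod_{i=0}^{r-1}\Gamma_p\bigl(\langle ap^i/(q-1)\rangle\bigr)$, and your Step 3 asserts that this identification ``requires careful bookkeeping'' without performing any of it. In the standard arguments this is precisely where one needs either Dwork's trace formula for Frobenius acting on the relevant $p$-adic Banach space, or (as in the original Gross--Koblitz paper) the reflection and multiplication formulas for $\Gamma_p$ combined with an induction on the base-$p$ digits of $a$; none of these computations appear, so what you have is an accurate roadmap of the known proof rather than a proof. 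Since the paper treats the formula as a black box, the appropriate course here is simply to cite \cite{gross} (or the exposition in \cite{kob}) rather than to attempt a self-contained derivation.
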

Now, we recall McCarthy's $p$-adic hypergeometric function. For $x \in \mathbb{Q}$, we let $\lfloor x\rfloor$ denote the greatest integer less than or equal to $x$ and $\langle x\rangle$ 
denote the fractional part of $x$, that is, $x-\lfloor x\rfloor$, satisfying $0\leq\langle x\rangle<1$. McCarthy's $p$-adic hypergeometric function $_{n}G_{n}[\cdots]_q$ is defined as follows.
\begin{definition}\cite[Definition 5.1]{mccarthy2} \label{defin1}
	Let $p$ be an odd prime and $q=p^r$, $r\geq 1$. Let $t \in \mathbb{F}_q$.
	For positive integer $n$ and $1\leq k\leq n$, let $a_k$, $b_k$ $\in \mathbb{Q}\cap \mathbb{Z}_p$.
	Then the function $_{n}G_{n}[\cdots]_q$ is defined by
	\begin{align}
		&_nG_n\left[\begin{array}{cccc}
			a_1, & a_2, & \ldots, & a_n \\
			b_1, & b_2, & \ldots, & b_n
		\end{array}|t
		\right]_q\notag\\
		&\hspace{1cm}:=\frac{-1}{q-1}\sum_{a=0}^{q-2}(-1)^{an}~~\overline{\omega}^a(t)
		\prod\limits_{k=1}^n\prod\limits_{i=0}^{r-1}(-p)^{-\lfloor \langle a_kp^i \rangle-\frac{ap^i}{q-1} \rfloor -\lfloor\langle -b_kp^i \rangle +\frac{ap^i}{q-1}\rfloor}\notag\\
		&\hspace{2cm} \times \frac{\Gamma_p(\langle (a_k-\frac{a}{q-1})p^i\rangle)}{\Gamma_p(\langle a_kp^i \rangle)}
		\frac{\Gamma_p(\langle (-b_k+\frac{a}{q-1})p^i \rangle)}{\Gamma_p(\langle -b_kp^i \rangle)}.\notag
	\end{align}
\end{definition}
In the proofs of our main results, we need to consider certain product formulas for the $p$-adic gamma function. The following lemmas relate certain products of values of the $p$-adic gamma function.
\begin{lemma}\emph{(\cite[Lemma 3.1]{BS1})}\label{lemma3_1}
Let $p$ be a prime and $q=p^r, r\geq 1$. For $0\leq a\leq q-2$ and $t\geq 1$ with $p\nmid t$, we have
\begin{align}
\omega(t^{-ta})\prod\limits_{i=0}^{r-1}\Gamma_p\left(\hspace{-.1cm}\left\langle\frac{-tp^ia}{q-1}\right\rangle
\hspace{-.1cm}\right)
\prod\limits_{h=1}^{t-1}\Gamma_p\left(\hspace{-.1cm}\left\langle \frac{hp^i}{t}\right\rangle\hspace{-.1cm}\right)
=\prod\limits_{i=0}^{r-1}\prod\limits_{h=0}^{t-1}\Gamma_p\left(\hspace{-.1cm}\left\langle\frac{p^i(1+h)}{t}-\frac{p^ia}{q-1}\right\rangle \hspace{-.1cm}\right).\notag
\end{align}
\end{lemma}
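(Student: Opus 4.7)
The identity is an $r$-fold product of the $p$-adic Gauss multiplication formula for the Morita gamma function, one factor for each Frobenius conjugate of $a/(q-1)$. My plan is to isolate a per-Frobenius-slice identity, reduce it to the multiplication formula, and then multiply over slices while tracking the accumulated Teichmüller factor.

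Fix $i\in\{0,\ldots,r-1\}$ and set $\alpha_i:=p^{i}a/(q-1)\in\mathbb{Q}\cap\mathbb{Z}_p$ and $z_i:=-t\alpha_i$. Since $\gcd(p,t)=1$, the maps $h\mapsto hp^{i}\bmod t$ and $h\mapsto(1+h)p^{i}\bmod t$ are permutations of $\{1,\ldots,t-1\}$ and of $\{0,1,\ldots,t-1\}$ respectively, so
\[
\prod_{h=1}^{t-1}\Gamma_p\!\left(\left\langle\tfrac{hp^{i}}{t}\right\rangle\right)=\prod_{h=1}^{t-1}\Gamma_p\!\left(\tfrac{h}{t}\right),\qquad \prod_{h=0}^{t-1}\Gamma_p\!\left(\left\langle\tfrac{(1+h)p^{i}}{t}-\alpha_i\right\rangle\right)=\prod_{h=0}^{t-1}\Gamma_p\!\left(\left\langle\tfrac{z_i+h}{t}\right\rangle\right).
\]
I then invoke the $p$-adic Gauss multiplication formula (valid for $p\nmid t$ and $z\in\mathbb{Z}_p$),
\[
\prod_{h=0}^{t-1}\Gamma_p\!\left(\left\langle\tfrac{z+h}{t}\right\rangle\right)=\omega\!\left(t^{\,c(z)}\right)\Gamma_p(\langle z\rangle)\prod_{h=1}^{t-1}\Gamma_p\!\left(\tfrac{h}{t}\right),
\]
with an explicit exponent $c(z)$ determined by the first $p$-adic digit of $tz$. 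Substituting $z=z_i$ produces the per-slice identity
\[
\prod_{h=0}^{t-1}\Gamma_p\!\left(\left\langle\tfrac{(1+h)p^{i}}{t}-\alpha_i\right\rangle\right)=\omega(t^{\,c(z_i)})\,\Gamma_p(\langle z_i\rangle)\prod_{h=1}^{t-1}\Gamma_p\!\left(\left\langle\tfrac{hp^{i}}{t}\right\rangle\right).
\]

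Multiplying over $i=0,\ldots,r-1$ gives the right-hand side of the lemma equal to the left-hand side but with $\prod_{i=0}^{r-1}\omega(t^{c(z_i)})$ in place of the stated factor $\omega(t^{-ta})$. What remains is therefore the single congruence $\sum_{i=0}^{r-1}c(z_i)\equiv -ta\pmod{q-1}$.

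The main obstacle is precisely this congruence. The exponent $c(z)$ depends only on $tz\bmod p$, and over the Frobenius orbit $\{z_i=-p^{i}\cdot ta/(q-1)\}_{i=0}^{r-1}$ the values $tz_i\bmod p$ cycle through the $p$-adic digits of $-ta/(q-1)$; summing these digits, combined with the Teichmüller identity $\omega(\alpha^{p})=\omega(\alpha)^{p}$, should collapse $\prod_i\omega(t^{c(z_i)})$ into $\omega(t)^{-ta}$. I would verify this either by a direct computation with the explicit form of $c(z)$ in the multiplication formula, or by expressing each per-slice identity through the Gross--Koblitz formula (Theorem \ref{thm2_3}) in an extension $\mathbb{F}_Q$ with $t\mid Q-1$ and invoking the Hasse--Davenport product formula, where the Teichmüller factor $\omega(t^{-ta})$ arises transparently as $\psi(t^{-t})$ for the auxiliary character $\psi=\overline{\omega}_Q^{aM}$ pushed down to $\mathbb{F}_q$ via $\omega_Q|_{\mathbb{F}_q^{\times}}=\omega$.
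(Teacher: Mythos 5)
First, a point of comparison: the paper does not prove this statement at all. Lemma \ref{lemma3_1} is quoted verbatim from \cite[Lemma 3.1]{BS1}, where identities of this type are derived from the Gross--Koblitz formula (Theorem \ref{thm2_3}) combined with the Hasse--Davenport product relation for Gauss sums -- essentially the second route you mention only in your closing sentence. So the proposal has to be judged on its own merits as a proof of the cited result.

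Your opening reductions are correct: since $p\nmid t$, multiplication by $p^i$ permutes the nonzero residues modulo $t$, so $\prod_{h=1}^{t-1}\Gamma_p\left(\left\langle \frac{hp^i}{t}\right\rangle\right)=\prod_{h=1}^{t-1}\Gamma_p\left(\frac{h}{t}\right)$, and the rewriting of the right-hand product in terms of $z_i=-tp^ia/(q-1)$ is sound. The genuine gap is that everything after this is deferred, and what is deferred is the entire content of the lemma, namely the unit factor $\omega(t^{-ta})$. Two concrete problems. First, the ``multiplication formula with fractional parts'' you invoke, with an unspecified exponent $c(z)$, is not an off-the-shelf statement: the standard $p$-adic Gauss--Legendre multiplication formula is stated for the genuine $p$-adic arguments $(z+h)/t$, not for their fractional parts, and $\Gamma_p(y)\neq\Gamma_p(\langle y\rangle)$ in general, so converting between the two versions requires repeated use of the functional equation $\Gamma_p(y+1)=-y\,\Gamma_p(y)$ (or $-\Gamma_p(y)$ when $p\mid y$) together with floor-function bookkeeping of exactly the kind the paper carries out in Lemmas \ref{lemma-3.2} and \ref{lemma-3_3}; moreover the unit in the standard formula is not a priori a power of $\omega(t)$ with an integer exponent read off from a single $p$-adic digit, and identifying it as such for these particular rational arguments is part of what must be proved. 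Second, the congruence $\sum_{i=0}^{r-1}c(z_i)\equiv -ta\pmod{q-1}$, which you yourself flag as ``the main obstacle'' and say you ``would verify,'' is precisely where the global datum $a$ is recovered from the $r$ local slices; without it your argument only shows the two sides agree up to an undetermined root of unity. The alternative you sketch -- Gross--Koblitz over an extension $\mathbb{F}_Q$ with $t\mid Q-1$ plus Hasse--Davenport -- is the workable one, but descending from the $Q$-side expansion back to the $r$ factors over $\mathbb{F}_q$ is itself a nontrivial step that the sketch does not address. As written, this is a plausible outline rather than a proof.
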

\begin{lemma}\emph{(\cite[Lemma 3.1]{BS1})}\label{lemma3_2}
Let $p$ be a prime and $q=p^r, r\geq 1$. For $0\leq a\leq q-2$ and $t\geq 1$ with $p\nmid t$, we have
\begin{align*}
\omega(t^{ta})\prod\limits_{i=0}^{r-1}\Gamma_p\left(\hspace{-.1cm} \left\langle\frac{tp^ia}{q-1}\right\rangle\hspace{-.1cm}\right)
\prod\limits_{h=1}^{t-1}\Gamma_p\left(\hspace{-.1cm}\left\langle \frac{hp^i}{t}\right\rangle\hspace{-.1cm}\right)
=\prod\limits_{i=0}^{r-1}\prod\limits_{h=0}^{t-1}\Gamma_p\left(\hspace{-.1cm} \left\langle\frac{p^i h}{t}+\frac{p^ia}{q-1}\right\rangle \hspace{-.1cm}\right).\notag
\end{align*}
\end{lemma}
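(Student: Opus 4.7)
The identity is a $p$-adic form of Gauss's multiplication formula $\prod_{h=0}^{t-1}\Gamma(x+h/t)=(2\pi)^{(t-1)/2}t^{1/2-tx}\Gamma(tx)$, specialised to $x=a/(q-1)$ and packaged across Frobenius conjugates. My plan is to translate both sides into Gauss sums via the Gross--Koblitz formula (Theorem~\ref{thm2_3}) and then invoke the Hasse--Davenport product relation for Gauss sums.

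First, since $\gcd(p,t)=1$, pick $s\ge 1$ with $t\mid q^s-1$ and set $Q=q^s=p^{rs}$. Let $\omega_Q$ denote the Teichm\"uller character on $\mathbb{F}_Q^{\times}$. Over $\mathbb{F}_Q$ there is a character $\eta:=\omega_Q^{-(Q-1)/t}$ of exact order $t$, and a character $\chi:=\omega_Q^{-a(Q-1)/(q-1)}$ that restricts to $\overline{\omega}^a$ on $\mathbb{F}_q^{\times}$. Applying Theorem~\ref{thm2_3} to each of $g_Q(\chi^t)$, $g_Q(\eta^h)$ for $1\le h\le t-1$, and $g_Q(\chi\eta^h)$ for $0\le h\le t-1$ expresses these Gauss sums as explicit products of $\Gamma_p$-values at arguments $\langle mp^j/(Q-1)\rangle$. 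Since a Frobenius orbit in $\mathbb{F}_Q^{\times}$ unfolds to $s$ copies of a Frobenius orbit in $\mathbb{F}_q^{\times}$, the $s$-th power of the LHS of the lemma (without the leading $\omega(t^{ta})$) and the $s$-th power of the RHS correspond, respectively, to $g_Q(\chi^t)\prod_{h=1}^{t-1}g_Q(\eta^h)$ and $\prod_{h=0}^{t-1}g_Q(\chi\eta^h)$, up to explicit powers of $\pi$. The Hasse--Davenport product relation
\[
\prod_{h=0}^{t-1}g_Q(\chi\eta^h) \;=\; -\chi(t^{-t})\, g_Q(\chi^t)\prod_{h=1}^{t-1}g_Q(\eta^h),
\]
combined with the identification $\chi(t^{-t})=\omega_Q(t^{-ta(Q-1)/(q-1)})$ whose restriction to $\mathbb{F}_q^{\times}$ gives $\omega(t^{-ta})^{s}$, then produces the $s$-th power of the desired identity. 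Taking $s$-th roots is unambiguous since both sides are units in $\mathbb{Z}_q^{\times}$ pinned down by their residues modulo $p$.

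The principal obstacle is bookkeeping: verifying that the $\pi$-exponents produced by the four Gross--Koblitz applications cancel (this ultimately reduces to the combinatorial identity $\langle x\rangle+\langle y\rangle=\langle x+y\rangle+\lfloor\langle x\rangle+\langle y\rangle\rfloor$ summed over $h$ and $j$) and that the Teichm\"uller factor $\chi(t^{-t})$ descends correctly from $\mathbb{F}_Q$ to $\mathbb{F}_q$ to give precisely $\omega(t^{ta})^{-1}$ on the appropriate side, while the single overall minus sign on the right of Hasse--Davenport is absorbed into one of the $\pi$-contributions. An intrinsic alternative avoiding the extension $\mathbb{F}_Q$ is induction on $a$: the base case $a=0$ is immediate since the $h=0$ term on the RHS is $\Gamma_p(0)=1$, and the inductive step compares how the fractional parts $\langle tp^ia/(q-1)\rangle$ and $\langle hp^i/t+ap^i/(q-1)\rangle$ shift when $a\mapsto a+1$, using $\Gamma_p(x+1)=-x\,\Gamma_p(x)$ for $p\nmid x$ and $\Gamma_p(x+1)=-\Gamma_p(x)$ otherwise to account for the factor $\omega(t^t)$ picked up at each step.
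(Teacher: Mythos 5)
First, note that the paper does not prove this lemma at all: it is imported verbatim from \cite[Lemma 3.1]{BS1}, so there is no in-paper argument to compare against. Your overall strategy --- Gross--Koblitz plus the Hasse--Davenport product relation, passing to an extension $\mathbb{F}_Q$ with $t\mid Q-1$ because no character of order $t$ exists over $\mathbb{F}_q$ when $t\nmid q-1$ --- is the standard route in this corner of the literature, and most of what you defer as ``bookkeeping'' does work out: the $\pi$-exponents cancel termwise via $\sum_{h=0}^{t-1}\langle x+\tfrac{h}{t}\rangle=\langle tx\rangle+\tfrac{t-1}{2}$; the Gross--Koblitz signs contribute $(-1)^t$ to each side of Hasse--Davenport and cancel (your displayed version of Hasse--Davenport carries a spurious leading minus sign --- test it at $\chi=\varepsilon$ --- so the ``overall minus sign to be absorbed'' is a phantom); and the regrouping $j=i+rk$ of the product over $0\le j\le rs-1$ into $s$ copies of the product over $0\le i\le r-1$ is legitimate only after taking the full product over $h$, because it uses that multiplication by $q^k$ permutes the residues $h$ modulo $t$. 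That last point fails for an individual $h$ and should be said explicitly. (Also, $\chi=\omega_Q^{-a(Q-1)/(q-1)}$ restricts to $\overline{\omega}^{as}$, not $\overline{\omega}^{a}$, though this is harmless since the restriction is never used.)

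The genuine gap is the final descent. The argument as designed yields $Y^s=\big(\omega(t^{ta})X\big)^s$, where $\omega(t^{ta})X$ and $Y$ denote the two sides of the lemma, and you assert that taking $s$-th roots is ``unambiguous since both sides are units in $\mathbb{Z}_q^{\times}$ pinned down by their residues modulo $p$.'' That is not a proof. Equality of $s$-th powers of units in $\mathbb{Z}_q^{\times}$ determines the ratio $\rho=Y/(\omega(t^{ta})X)$ only up to an $s$-th root of unity in $\mathbb{Z}_q^{\times}$, i.e.\ up to an element of $\mu_{\gcd(s,q-1)}$ (for odd $p$ the group $1+p\mathbb{Z}_q$ is torsion-free). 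Since $s$ must be a multiple of the multiplicative order of $q$ modulo $t$, you cannot in general arrange $\gcd(s,q-1)=1$; already $q=7$, $t=4$ forces $s$ even, so $\gcd(s,6)\ge 2$ and $\rho=-1$ is not excluded. The observation that elements of $\mu_{q-1}$ are distinguished by their residues is useless here unless you actually show the two sides are congruent modulo $p$, and you have not computed the residue of either product of $\Gamma_p$-values. So, as written, the argument proves the identity only up to a root of unity of controlled order. This must be closed --- by a congruence pinning down $\rho$, by a direct appeal to the $p$-adic Gauss--Legendre multiplication formula for $\Gamma_p$ (which gives the identity for each fixed $i$ with no field extension and no root-of-unity ambiguity), or by actually carrying out the induction on $a$ that you only sketch in your last sentence.
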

We prove the following lemmas which will be used to prove our main results.
\begin{lemma}\label{lemma-3.2}
Let $p$ be an odd prime and $q=p^r, r\geq 1$. Let $d\geq2$ be an integer such that $p\nmid d$. Then, for $1\leq a\leq q-2$ and $0\leq i\leq r-1$, we have
\begin{align}\label{eq-5}
\left\lfloor\frac{ap^i}{q-1}\right\rfloor +\left\lfloor\frac{-dap^i}{q-1}\right\rfloor = \sum_{h=1}^{d-1} \left\lfloor\left\langle\frac{hp^i}{d}\right\rangle-\frac{ap^i}{q-1}\right\rfloor -1.
\end{align}
\end{lemma}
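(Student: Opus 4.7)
The plan is to reduce the right-hand side to a single floor via Hermite's identity after using a permutation symmetry, and then reconcile with the left-hand side by observing that the quantity $y := ap^i/(q-1)$ is never an integer in the allowed range.

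First I would set $y := ap^i/(q-1)$ and note that because $1\le a\le q-2$ and $\gcd(p,q-1)=1$, the denominator $q-1$ does not divide $ap^i$, so $y\notin\mathbb{Z}$. Consequently $\lfloor -y\rfloor = -\lfloor y\rfloor-1$, an identity I will need at the end.

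Next I would exploit that $p\nmid d$: the map $h\mapsto hp^i\bmod d$ is a permutation of $\{1,2,\ldots,d-1\}$, so as $h$ runs over $1,\ldots,d-1$, the fractional parts $\langle hp^i/d\rangle$ run through $\{1/d,2/d,\ldots,(d-1)/d\}$ in some order. Therefore
\begin{align*}
\sum_{h=1}^{d-1}\left\lfloor\left\langle\frac{hp^i}{d}\right\rangle-y\right\rfloor=\sum_{h=1}^{d-1}\left\lfloor\frac{h}{d}-y\right\rfloor.
\end{align*}
Adding the $h=0$ term $\lfloor -y\rfloor$ to both sides and applying Hermite's identity $\sum_{h=0}^{d-1}\lfloor x+h/d\rfloor=\lfloor dx\rfloor$ with $x=-y$ gives
\begin{align*}
\sum_{h=0}^{d-1}\left\lfloor\frac{h}{d}-y\right\rfloor=\lfloor -dy\rfloor.
\end{align*}
Hence
\begin{align*}
\sum_{h=1}^{d-1}\left\lfloor\left\langle\frac{hp^i}{d}\right\rangle-y\right\rfloor=\lfloor -dy\rfloor-\lfloor -y\rfloor.
\end{align*}

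Finally, substituting $\lfloor -y\rfloor=-\lfloor y\rfloor-1$ (valid since $y\notin\mathbb{Z}$) yields
\begin{align*}
\sum_{h=1}^{d-1}\left\lfloor\left\langle\frac{hp^i}{d}\right\rangle-y\right\rfloor=\lfloor -dy\rfloor+\lfloor y\rfloor+1,
\end{align*}
which is exactly \eqref{eq-5}. The only subtle point is the non-integrality of $y$, but it follows immediately from $1\le a\le q-2$ together with $\gcd(p,q-1)=1$, so the argument goes through cleanly without any auxiliary assumption on $\gcd(d,q-1)$.
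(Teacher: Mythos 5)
Your proof is correct, and it takes a genuinely different route from the paper's. Both arguments begin with the same observation that $p\nmid d$ makes $h\mapsto hp^i \bmod d$ a permutation of $\{1,\dots,d-1\}$, so the twisted fractional parts $\langle hp^i/d\rangle$ can be replaced by $h/d$. From there the paper proceeds by hand: it writes $\lfloor -dap^i/(q-1)\rfloor = dk+s$ with $0\le s\le d-1$, deduces $\lfloor ap^i/(q-1)\rfloor = -k-1$, and then evaluates each term $\lfloor h/d - ap^i/(q-1)\rfloor$ by a case analysis on $s$ (getting $k$ for $h\le d-s-1$ and $k+1$ for $h\ge d-s$), summing to $(d-1)k+s$. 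Your proof packages exactly that casework into Hermite's identity $\sum_{h=0}^{d-1}\lfloor x+h/d\rfloor=\lfloor dx\rfloor$ applied at $x=-y$, and then reconciles the two sides via $\lfloor -y\rfloor=-\lfloor y\rfloor-1$, which is where the hypothesis $1\le a\le q-2$ (forcing $y=ap^i/(q-1)\notin\mathbb{Z}$) enters; the paper uses the same non-integrality implicitly when it asserts $\lfloor ap^i/(q-1)\rfloor=-k-1$ in the boundary case $s=0$. Your version is shorter and makes the role of each hypothesis ($p\nmid d$ for the permutation, $a\ne 0$ for the non-integrality, no need for $\gcd(d,q-1)=1$) more transparent, at the cost of invoking Hermite's identity as an external fact; the paper's version is self-contained but obscures that the whole computation is just that classical identity in disguise.
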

\begin{proof}
Let $\left\lfloor\frac{-dap^i}{q-1}\right\rfloor=dk+s$, where $k, s \in \mathbb{Z}$ and $0\leq s\leq d-1$.
This yields
\begin{align*}
dk+s \leq \frac{-dap^i}{q-1}< dk+s+1,
\end{align*}
which implies
\begin{align}\label{eq-6}
k+\frac{s}{d} \leq \frac{-ap^i}{q-1}< k+\frac{s}{d}+\frac{1}{d}
\end{align}
and
\begin{align}\label{eq-7}
-k-\frac{s}{d}-\frac{1}{d} < \frac{ap^i}{q-1} \leq -k-\frac{s}{d}.
\end{align}
Since $a\neq 0$, by \eqref{eq-7} we obtain $\left\lfloor\frac{ap^i}{q-1}\right\rfloor = -k-1$ and hence the left hand side of \eqref{eq-5} becomes $(d-1)k+s-1$.
Since $p\nmid d$, we observe that 
\begin{align*}
\sum_{h=1}^{d-1} \left\lfloor\left\langle\frac{hp^i}{d}\right\rangle-\frac{ap^i}{q-1}\right\rfloor = \sum_{h=1}^{d-1} \left\lfloor\left\langle\frac{h}{d}\right\rangle-\frac{ap^i}{q-1}\right\rfloor.
\end{align*}
For $1\leq h\leq d-1$, \eqref{eq-6} gives 
\begin{align}\label{eq-7.0}
k+\frac{s}{d}+\frac{h}{d} \leq \frac{h}{d} - \frac{ap^i}{q-1}< k+\frac{s}{d}+\frac{1}{d}+\frac{h}{d}.
\end{align}
If $s=0$, then $\left\lfloor\left\langle\frac{h}{d}\right\rangle-\frac{ap^i}{q-1}\right\rfloor=k$ for $h=1,\ldots,d-1$ and hence 
\begin{align*}
	\sum_{h=1}^{d-1} \left\lfloor\left\langle\frac{h}{d}\right\rangle-\frac{ap^i}{q-1}\right\rfloor =(d-1)k. 
\end{align*}
If $1\leq s \leq d-1$, then \eqref{eq-7.0} yields
\begin{align}\label{eq-7.1}
\left\lfloor\left\langle\frac{h}{d}\right\rangle-\frac{ap^i}{q-1}\right\rfloor=\left\{
                                              \begin{array}{ll}
                                                k, & \hbox{if $1\leq h\leq d-s-1$;} \\
                                                k+1, & \hbox{if $d-s \leq h \leq d-1$.}
                                              \end{array}
                                            \right.
\end{align}
From \eqref{eq-7.1}, we obtain 
\begin{align*}
	\sum_{h=1}^{d-1} \left\lfloor\left\langle\frac{h}{d}\right\rangle-\frac{ap^i}{q-1}\right\rfloor = (d-1)k+s.
\end{align*}
 Thus, for $0\leq s\leq d-1$, we have
 \begin{align*}
 	\sum_{h=1}^{d-1} \left\lfloor\left\langle\frac{h}{d}\right\rangle-\frac{ap^i}{q-1}\right\rfloor -1 = (d-1)k+s-1.
 \end{align*}
 Therefore, the right hand side of \eqref{eq-5} becomes $(d-1)k+s-1$. This completes the proof of the lemma. 
\end{proof}
\begin{lemma}\label{lemma-3_3}
Let $p$ be an odd prime and $q=p^r, r\geq 1$. Let $l$ be a positive integer such that $p\nmid l$. Then, for $0\leq a\leq q-2$ and $0\leq i\leq r-1$, we have
\begin{align}\label{eq-10}
\left\lfloor\frac{lap^i}{q-1}\right\rfloor = \sum_{h=0}^{l-1} \left\lfloor\left\langle\frac{-hp^i}{l}\right\rangle+\frac{ap^i}{q-1}\right\rfloor.
\end{align}
\end{lemma}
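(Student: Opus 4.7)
The plan is to reduce this to a standard floor-function identity (Hermite's identity) after using the hypothesis $p\nmid l$ to permute the summation index on the right.

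First I would invoke Hermite's identity: for any real $x$ and positive integer $l$,
\begin{align*}
\lfloor lx\rfloor = \sum_{h=0}^{l-1}\left\lfloor x+\frac{h}{l}\right\rfloor.
\end{align*}
Applying this with $x=\frac{ap^i}{q-1}$ immediately gives the right-hand side of \eqref{eq-10} if we replace $\langle -hp^i/l\rangle$ by $h/l$, so the only genuine content of the lemma is that these two collections of fractional parts coincide.

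Next I would verify exactly that identification. Since $p\nmid l$, the integer $p^i$ is invertible modulo $l$, hence the map $h\mapsto -hp^i\pmod{l}$ is a bijection on $\{0,1,\ldots,l-1\}$. Consequently the multiset $\{\langle -hp^i/l\rangle : 0\le h\le l-1\}$ coincides with the multiset $\{h/l : 0\le h\le l-1\}$. So
\begin{align*}
\sum_{h=0}^{l-1}\left\lfloor\left\langle\frac{-hp^i}{l}\right\rangle+\frac{ap^i}{q-1}\right\rfloor=\sum_{h=0}^{l-1}\left\lfloor\frac{h}{l}+\frac{ap^i}{q-1}\right\rfloor,
\end{align*}
and combining with Hermite's identity yields \eqref{eq-10}.

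If I wanted to avoid citing Hermite's identity (to keep the paper self-contained and parallel to the proof of Lemma~\ref{lemma-3.2}), I would instead argue by direct case analysis: write $\lfloor lap^i/(q-1)\rfloor=lk+s$ with $0\le s\le l-1$, deduce $k+s/l\le ap^i/(q-1)<k+(s+1)/l$, and observe that adding $h/l$ gives $\lfloor h/l+ap^i/(q-1)\rfloor=k$ when $h\le l-s-1$ and $k+1$ when $h\ge l-s$. Summing over the $l-s$ and $s$ terms respectively produces $lk+s$, matching the left-hand side. I do not anticipate any real obstacle; the only subtle point is remembering that the permutation argument needs $\gcd(p,l)=1$, which is exactly the hypothesis $p\nmid l$. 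The boundary case $a=0$ is immediate since both sides are then $0$.
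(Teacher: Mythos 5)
Your proposal is correct, and your primary route is genuinely different from the paper's. Both arguments share the essential arithmetic input: since $p\nmid l$, the integer $p^i$ is invertible modulo $l$, so $h\mapsto -hp^i \bmod l$ permutes $\{0,1,\ldots,l-1\}$ and the fractional parts $\langle -hp^i/l\rangle$ can be replaced by a standard set of representatives. Where you diverge is in how the remaining floor-sum identity is settled: you quote Hermite's identity $\lfloor lx\rfloor=\sum_{h=0}^{l-1}\lfloor x+h/l\rfloor$ with $x=\frac{ap^i}{q-1}$, which makes the lemma an immediate corollary of a classical fact, whereas the paper proves that identity from scratch by writing $\lfloor lap^i/(q-1)\rfloor=lk+s$ with $0\le s\le l-1$ and splitting into the cases $s=0$ and $1\le s\le l-1$ (working with the representatives $\langle -h/l\rangle=(l-h)/l$ rather than $h/l$, which only reverses the index in the case analysis). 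Your fallback ``direct case analysis'' paragraph is, up to that reindexing, exactly the paper's proof. The Hermite route buys brevity and makes clear that the only content specific to the situation is the residue permutation; the paper's computation buys self-containedness and runs parallel to the proof of Lemma~\ref{lemma-3.2}, which does not reduce quite so cleanly to a named identity. Your counting in the alternative ($l-s$ terms equal to $k$ and $s$ terms equal to $k+1$, summing to $lk+s$) checks out, as does the observation that the $a=0$ case is trivial.
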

\begin{proof}
Let $\left\lfloor\frac{lap^i}{q-1}\right\rfloor=lk+s$, where $k, s \in \mathbb{Z}$ and $0\leq s\leq l-1$.
Then
\begin{align*}
lk+s \leq \frac{lap^i}{q-1}< lk+s+1,
\end{align*}
which implies
\begin{align}\label{eq-11}
k+\frac{s}{l} \leq \frac{ap^i}{q-1}< k+\frac{s}{l}+\frac{1}{l}.
\end{align}
Using \eqref{eq-11}, we have $\left\lfloor\frac{ap^i}{q-1}\right\rfloor = k$. 
Since $p\nmid l$, we observe that 
\begin{align*}
\sum_{h=1}^{l-1} \left\lfloor\left\langle\frac{-hp^i}{l}\right\rangle+\frac{ap^i}{q-1}\right\rfloor = \sum_{h=1}^{l-1} \left\lfloor\left\langle\frac{-h}{l}\right\rangle+\frac{ap^i}{q-1}\right\rfloor.
\end{align*}
For $1\leq h\leq l-1$, \eqref{eq-11} gives 
\begin{align}\label{eq-11.0}
k+\frac{s}{l}-\frac{h}{l}+1 \leq \left\langle\frac{-h}{l}\right\rangle + \frac{ap^i}{q-1}< k+\frac{s}{l}+\frac{1}{l}-\frac{h}{l}+1.
\end{align}
If $s=0$, then $\left\lfloor\left\langle\frac{-h}{l}\right\rangle+\frac{ap^i}{q-1}\right\rfloor=k$ for all $1\leq h\leq l-1$ and hence 
\begin{align*}
\sum_{h=1}^{l-1} \left\lfloor\left\langle\frac{-h}{l}\right\rangle+\frac{ap^i}{q-1}\right\rfloor =(l-1)k. 
\end{align*}
If $1\leq s \leq l-1$, then \eqref{eq-11.0} yields
\begin{align}\label{eq-11.1}
\left\lfloor\left\langle\frac{-h}{l}\right\rangle+\frac{ap^i}{q-1}\right\rfloor=\left\{
                                              \begin{array}{ll}
                                                k+1, & \hbox{if $1\leq h\leq s$;} \\
                                                k, & \hbox{if $s+1 \leq h \leq l-1$.}
                                              \end{array}
                                            \right.
\end{align}
From \eqref{eq-11.1}, we obtain 
\begin{align*}
	\sum_{h=1}^{l-1} \left\lfloor\left\langle\frac{-h}{l}\right\rangle+\frac{ap^i}{q-1}\right\rfloor = (l-1)k+s.
\end{align*}
Thus, for $0\leq s\leq l-1$, we have
 \begin{align*}
 	\sum_{h=0}^{l-1} \left\lfloor\left\langle\frac{-h}{l}\right\rangle+\frac{ap^i}{q-1}\right\rfloor = k+(l-1)k+s=lk+s.
 \end{align*}
This completes the proof of the lemma.
\end{proof}
\begin{lemma}\label{lemma-3.4}
Let $p$ be an odd prime and $q=p^r, r\geq 1$. Let $d\geq 2$ be an integer such that $\gcd(d,p(q-1))=1$. Let $h_1,\ldots,h_n$ be positive integers such that $\sum_{k=1}^n h_k=d$ and $\gcd(h_1\cdots h_n, p)=1$ Then, for $1\leq a\leq q-2$ and $0\leq i\leq r-1$, we have
\begin{align*}
&\left\lfloor\frac{-dap^i}{q-1}\right\rfloor +\sum_{k=1}^{n} \left\lfloor\frac{h_k ap^i}{q-1}\right\rfloor\nonumber\\
&= \sum_{h=1}^{d-1} \left\lfloor\left\langle\frac{hp^i}{d}\right\rangle -\frac{ap^i}{q-1}\right\rfloor+ \sum_{k=1}^{n} \sum_{h=0}^{h_k-1} \left\lfloor\left\langle\frac{-hp^i}{h_k}\right\rangle+\frac{ap^i}{q-1}\right\rfloor -1-\left\lfloor \frac{ap^i}{q-1}\right\rfloor.
\end{align*}
\end{lemma}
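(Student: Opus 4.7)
The plan is extremely direct: the identity is nothing more than an algebraic combination of the two preceding lemmas.

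First I would isolate $\left\lfloor\frac{-dap^i}{q-1}\right\rfloor$ on the left of Lemma \ref{lemma-3.2}, which (since $p \nmid d$ follows from $\gcd(d,p(q-1))=1$) rewrites that term as
\begin{align*}
\left\lfloor\frac{-dap^i}{q-1}\right\rfloor = \sum_{h=1}^{d-1}\left\lfloor\left\langle\frac{hp^i}{d}\right\rangle-\frac{ap^i}{q-1}\right\rfloor - 1 - \left\lfloor\frac{ap^i}{q-1}\right\rfloor.
\end{align*}

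Next, for each fixed $k$, since $p \nmid h_k$, I would apply Lemma \ref{lemma-3_3} with $l=h_k$ to obtain
\begin{align*}
\left\lfloor\frac{h_k ap^i}{q-1}\right\rfloor = \sum_{h=0}^{h_k-1}\left\lfloor\left\langle\frac{-hp^i}{h_k}\right\rangle + \frac{ap^i}{q-1}\right\rfloor,
\end{align*}
and then sum these identities over $k=1,\dots,n$.

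Adding the two displayed equations produces exactly the right-hand side of the statement, so no further manipulation is needed. There is no real obstacle here; the only thing to check is that the hypotheses of the two auxiliary lemmas are met, which is immediate from $\gcd(d,p(q-1))=1$ and $\gcd(h_1\cdots h_n,p)=1$ (note also that Lemma \ref{lemma-3_3} is valid at $a=0$ as well, but we only need the range $1\le a \le q-2$ here, matching the range required by Lemma \ref{lemma-3.2}).
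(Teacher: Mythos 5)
Your proposal is correct and is essentially identical to the paper's own proof: the authors likewise sum Lemma \ref{lemma-3_3} with $l=h_k$ over $k=1,\dots,n$ and then add the result to the identity of Lemma \ref{lemma-3.2}, which is just a rearranged form of your first displayed equation. The hypothesis checks you note ($p\nmid d$ and $p\nmid h_k$, and the range $1\le a\le q-2$) are exactly what is needed, so there is nothing to add.
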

\begin{proof}
Putting  $l=h_k$ for $k=1, 2, \ldots, n$ in \eqref{eq-10}, and then adding them we have
\begin{align}\label{new-eq-1}
\sum_{k=1}^{n} \left\lfloor\frac{h_k ap^i}{q-1}\right\rfloor= \sum_{k=1}^{n} \sum_{h=0}^{h_k-1} \left\lfloor\left\langle\frac{-hp^i}{h_k}\right\rangle+\frac{ap^i}{q-1}\right\rfloor.
\end{align}
We now readily obtain the required identity by adding \eqref{eq-5} and \eqref{new-eq-1}.
\end{proof}
Finally, we recall an identity from \cite{BS1}.
\begin{lemma}\emph{(\cite[Eq. (23)]{BS1})}\label{lemma-3.3}
Let $p$ be an odd prime and $q=p^{r}, r\geq 1$. For $0<a\leq q-2$, we have
\begin{align*}
\prod_{i=0}^{r-1} \Gamma_{p}\left(\left\langle\left(1-\frac{a}{q-1}\right)p^{i}\right\rangle\right)\Gamma_{p}\left(\left\langle\frac{ap^{i}}{q-1}\right\rangle\right) = (-1)^r \overline{\omega}^{a}(-1).
\end{align*}
\end{lemma}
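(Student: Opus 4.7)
The plan is to derive this identity from the Gross--Koblitz formula (Theorem~\ref{thm2_3}) together with the classical Gauss-sum relation $g(\chi)g(\overline{\chi})=\chi(-1)q$ for non-trivial $\chi$. The two products of $p$-adic gamma values appearing in the lemma arise naturally on the gamma side of Gross--Koblitz applied to $g(\overline{\omega}^a)$ and to $g(\omega^a)=g(\overline{\omega}^{q-1-a})$, and when the two Gross--Koblitz expressions are multiplied the ambient powers of $\pi$ collapse to a clean power of $q$.

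First I would record the elementary fractional-part identity used below. Since $1\le a\le q-2$ and $\gcd(p,q-1)=1$, the number $\frac{ap^i}{q-1}$ is never an integer while $p^i$ is, so
\begin{align*}
\left\langle\left(1-\tfrac{a}{q-1}\right)p^i\right\rangle
=\left\langle -\tfrac{ap^i}{q-1}\right\rangle
=1-\left\langle\tfrac{ap^i}{q-1}\right\rangle
=\left\langle\tfrac{(q-1-a)p^i}{q-1}\right\rangle.
\end{align*}
Consequently, writing $S=\sum_{i=0}^{r-1}\langle\tfrac{ap^i}{q-1}\rangle$, Theorem~\ref{thm2_3} applied to $\overline{\omega}^a$ and to $\overline{\omega}^{q-1-a}$ gives
\begin{align*}
g(\overline{\omega}^a)&=-\pi^{(p-1)S}\prod_{i=0}^{r-1}\Gamma_p\!\left(\left\langle\tfrac{ap^i}{q-1}\right\rangle\right), \\
g(\omega^a)=g(\overline{\omega}^{q-1-a})&=-\pi^{(p-1)(r-S)}\prod_{i=0}^{r-1}\Gamma_p\!\left(\left\langle\left(1-\tfrac{a}{q-1}\right)p^i\right\rangle\right).
\end{align*}

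Next I would multiply these two identities. The exponents of $\pi$ add to $(p-1)r$, and since $\pi$ is a root of $x^{p-1}+p$ one has $\pi^{(p-1)r}=(-p)^r=(-1)^r q$; the two factors of $-1$ combine to $+1$. Hence
\begin{align*}
g(\overline{\omega}^a)g(\omega^a)=(-1)^r q\prod_{i=0}^{r-1}\Gamma_p\!\left(\left\langle\tfrac{ap^i}{q-1}\right\rangle\right)\Gamma_p\!\left(\left\langle\left(1-\tfrac{a}{q-1}\right)p^i\right\rangle\right).
\end{align*}
Because $0<a<q-1$, the character $\overline{\omega}^a$ is non-trivial, so the classical identity yields $g(\overline{\omega}^a)g(\omega^a)=\overline{\omega}^a(-1)q$. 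Equating these two expressions, cancelling $q$, and multiplying through by $(-1)^r$ produces the stated formula.

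The only real obstacle is sign bookkeeping: one must track the two leading minus signs of Gross--Koblitz, the factor $(-1)^r$ coming from $\pi^{(p-1)r}$, and the fractional-part identity for $\langle-\tfrac{ap^i}{q-1}\rangle$ all at once. Once those are correctly reconciled, the proof reduces to two short applications of Gross--Koblitz and a single use of $g(\chi)g(\overline{\chi})=\chi(-1)q$, with no further computation required.
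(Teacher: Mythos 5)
Your derivation is correct: the fractional-part identity, the two applications of Gross--Koblitz, the collapse $\pi^{(p-1)r}=(-p)^r=(-1)^rq$, and the use of $g(\chi)g(\overline{\chi})=\chi(-1)q$ for the non-trivial character $\overline{\omega}^a$ all check out and yield exactly the stated formula. The paper itself gives no proof (it simply quotes the identity from \cite[Eq.\ (23)]{BS1}), but your argument is the standard derivation of that equation, so there is nothing further to compare.
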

 \section{Proof of Theorem \ref{MT-1} and Theorem \ref{MT-2}}
 \begin{proof}[Proof of Theorem \ref{MT-1}]
 Let $N_q^d(\lambda)$ denote the number of points on the diagonal hypersurface $D_\lambda^d$ in $\mathbb{A}^{n}(\mathbb{F}_q)$. Then we have
\begin{align}\label{eq-12.1}
\# D_\lambda^d (\mathbb{F}_q)=\frac{N_q^d(\lambda) -1}{q-1}. 
\end{align}
Let $\overline{x}=(x_1,x_2,\ldots, x_n)$ and $f(\overline{x})=x_1^d+x_2^d+\cdots+x_n^d-d\lambda x_1^{h_1}\cdots x_n^{h_n}$ and using the identity 
\begin{align*}
\sum_{z\in \mathbb{F}_q} \theta(zf(\overline{x}))= \left\{
\begin{array}{ll}
q, & \hbox{if $f(\overline{x})=0$;} \\
0, & \hbox{if $f(\overline{x})\neq 0$,}
\end{array}
\right.
\end{align*}
we obtain
\begin{align}\label{eq-13}
q\cdot N_q^d(\lambda)&= \sum_{z\in\mathbb{F}_q}\sum_{x_i \in\mathbb{F}_q}\theta(zf(\overline{x}))\nonumber\\
&=q^n + \sum_{z,x_i \in\mathbb{F}_q^{\times}}\theta(zf(\overline{x})) +\sum_{z\in\mathbb{F}_q^{\times}}\sum_{\begin{subarray}{1} \ \text{some } \\ \  x_i =0\end{subarray}} \theta(zf(\overline{x})).
\end{align}
Consider a polynomial $f_1(\overline{x})=x_1^d+\cdots + x_n^d$ and let $N_q^{'}$ be the number of solutions of $f_1(\overline{x})$ in $\mathbb{A}^n(\mathbb{F}_q)$.
Since $d$ is an integer such that $\gcd(d, q-1)=1$, therefore $x \mapsto x^d$ is an automorphism of $\mathbb{F}_q^\times$. This gives that $N_q^{'}=q^{n-1}$.
Also, repeating the same process for $f_1(\overline{x})$ as done in \eqref{eq-13} for $f(\overline{x})$, we deduce that 
\begin{align*}
q\cdot N_q^{'}=q^n+  \sum_{z,x_i \in\mathbb{F}_q^{\times}}\theta(zf_1(\overline{x})) +\sum_{z\in\mathbb{F}_q^{\times}}\sum_{\begin{subarray}{1} \ \text{some } \\ \  x_i =0\end{subarray}} \theta(zf_1(\overline{x})).
\end{align*}
Thus,
\begin{align}\label{eq-13.1}
  \sum_{z\in\mathbb{F}_q^{\times}}\sum_{\begin{subarray}{1} \ \text{some } \\ \  x_i =0\end{subarray}} \theta(zf_1(\overline{x}))=-\sum_{z,x_i \in\mathbb{F}_q^{\times}}\theta(zf_1(\overline{x})).
\end{align}
Also,
\begin{align}\label{eq-14}
\sum_{z\in\mathbb{F}_q^{\times}}\sum_{\begin{subarray}{1} \ \text{some } \\ \  x_i =0\end{subarray}} \theta(zf_1(\overline{x})) = \sum_{z\in\mathbb{F}_q^{\times}}\sum_{\begin{subarray}{1} \ \text{some } \\ \  x_i =0\end{subarray}} \theta(zf(\overline{x})).
\end{align}
Combining \eqref{eq-13}, \eqref{eq-13.1}, and \eqref{eq-14}, we obtain
\begin{align}\label{eq-17}
q\cdot N_q^d(\lambda)&=q^n + \sum_{z,x_i \in\mathbb{F}_q^{\times}}\theta(zf(\overline{x})) -\sum_{z,x_i \in\mathbb{F}_q^{\times}} \theta(zf_1(\overline{x}))\nonumber\\
&=q^n+A-B,
\end{align}
where $A:=\sum_{z,x_i \in\mathbb{F}_q^{\times}}\theta(zf(\overline{x}))$ and $B:=\sum_{z,x_i \in\mathbb{F}_q^{\times}} \theta(zf_1(\overline{x}))$. We now evaluate the values of $A$ and $B$, respectively. Firstly, we calculate $B$. We have
\begin{align*}
B&=\sum_{z,x_i \in\mathbb{F}_q^{\times}}\theta(z(x_1^d+\cdots+x_n^d))=\sum_{z,x_i \in\mathbb{F}_q^{\times}}\theta(z(x_1^d))\cdots\theta(z(x_n^d)).\\
\end{align*}
Lemma \ref{lemma2_2} yields
\begin{align*}
B&=\frac{1}{(q-1)^{n}}\sum_{z,x_i \in\mathbb{F}_q^{\times}} \sum_{a_1,\ldots,a_n=0}^{q-2}g(T^{-a_1})\cdots g(T^{-a_n})T^{a_1}(zx_1^d)\cdots T^{a_n}(zx_n^d)\\
&=\frac{1}{(q-1)^{n}} \sum_{a_1,\ldots,a_n=0}^{q-2} g(T^{-a_1})\cdots g(T^{-a_n})\\
&\hspace*{.5cm}\times\sum_{z\in\mathbb{F}_q^{\times}}T^{a_1+\cdots+a_n}(z)\sum_{x_1\in\mathbb{F}_q^{\times}} T^{a_1d}(x_1)\cdots\sum_{x_n\in\mathbb{F}_q^{\times}} T^{a_nd}(x_n).
\end{align*}
The inner sums are nonzero only if $a_1+\cdots +a_n\equiv 0\pmod{q-1}$ and $a_1d,\ldots, a_nd\equiv0\pmod{q-1}$. Since $\gcd(d, q-1)=1$, all the congruences simultaneously hold only if $a_1=\cdots= a_n =0$. Using the fact that $g(\varepsilon)=-1$, we obtain $B=(-1)^n (q-1).$\\
Next, we  calculate the value of $A$. We have
\begin{align*}
A&=\sum_{z,x_i \in\mathbb{F}_q^{\times}}\theta(zf(\overline{x}))=\sum_{z,x_i \in\mathbb{F}_q^{\times}}\theta(z(x_1^d+\cdots+x_n^d-\lambda d x_1^{h_1}\cdots x_n^{h_n}))\nonumber\\
&=\sum_{z,x_i \in\mathbb{F}_q^{\times}}\theta(zx_1^d)\cdots\theta(zx_n^d)\theta(-z\lambda d x_1^{h_1}\cdots x_n^{h_n}).
\end{align*}
By using Lemma \ref{lemma2_2}, we obtain
\begin{align*}
A=&\frac{1}{(q-1)^{n+1}} \sum_{a_1,\ldots,a_{n+1}=0}^{q-2} g(T^{-a_1})\cdots g(T^{-a_n})g(T^{-a_{n+1}})\\
&\hspace*{.5cm}\times\sum_{z,x_i\in\mathbb{F}_q^{\times}}T^{a_1}(zx_1^d)\cdots T^{a_n}(zx_n^d)T^{a_{n+1}}(-\lambda d zx_1^{h_1}\cdots x_n^{h_n})\\
&=\frac{1}{(q-1)^{n+1}} \sum_{a_1,\ldots,a_{n+1}=0}^{q-2}g(T^{-a_1})\cdots g(T^{-a_{n+1}})T^{a_{n+1}}(-\lambda d)\\
&\hspace*{.5cm}\times\sum_{z \in\mathbb{F}_q^{\times}}T^{a_1+\cdots+a_{n+1}}(z)\sum_{x_1\in\mathbb{F}_q^{\times}} T^{a_1d+a_{n+1}h_1}(x_1)\cdots\sum_{x_n\in\mathbb{F}_q^{\times}} T^{a_nd+a_{n+1}h_n}(x_n).
\end{align*}
The inner sums are nonzero only if $a_1+\cdots +a_{n+1}\equiv 0\pmod{q-1}$ and $a_1 d+a_{n+1}h_1,\ldots, a_nd+a_{n+1}h_n\equiv0\pmod{q-1}$. For $0\leq a\leq q-2$, we have $a_i\equiv h_i a\pmod{q-1}$  for $i=1,\ldots, n $ and $a_{n+1}\equiv -da\pmod{q-1}$ as $\gcd(d, q-1)=1$. Thus
\begin{align}\label{eq-16.1}
A&=\sum_{a=0}^{q-2}g(T^{-ah_1})\cdots g(T^{-ah_{n}})g(T^{ad})T^{-ad}(-\lambda d).
\end{align}
Taking $T=\omega$, and then using Gross-Koblitz formula, we obtain
\begin{align}\label{eqn-15}
A&=(-1)^{n+1}\sum_{a=0}^{q-2}\overline{\omega}^{ad}(-\lambda d)(-p)^{\displaystyle{\sum_{i=0}^{r-1}}\left( \displaystyle{\sum_{k=1}^{n}}\left\langle\frac{h_kap^i}{q-1}\right\rangle+\left\langle\frac{-dap^i}{q-1}\right\rangle\right)}\nonumber\\
&\hspace*{.5cm}\times\prod_{i=0}^{r-1}\Gamma_p\left(\left\langle\frac{-dp^ia}{q-1}\right\rangle          \right)\prod_{k=1}^{n}\Gamma_p\left(\left\langle\frac{h_k p^ia}{q-1}\right\rangle\right).
\end{align}
Applying Lemma \ref{lemma3_1} with $t=d$ and Lemma \ref{lemma3_2} with  $t=h_k$, we have
\begin{align*}
\omega(d^{-ad})\prod\limits_{i=0}^{r-1}\Gamma_p\left(\left\langle\frac{-dp^ia}{q-1}\right\rangle\right)
=\prod\limits_{i=0}^{r-1}\frac{\prod\limits_{h=0}^{d-1}\Gamma_p\left(\left\langle\frac{p^i(1+h)}{d}-\frac{p^ia}{q-1}\right\rangle \right)}{\prod\limits_{h=1}^{d-1}\Gamma_p\left(\left\langle \frac{hp^i}{d}\right\rangle\right)},\\
\omega(h_k^{h_ka})\prod\limits_{i=0}^{r-1}\Gamma_p\left(\left\langle\frac{h_kp^ia}{q-1}\right\rangle\right)
=\prod\limits_{i=0}^{r-1}\frac{\prod\limits_{h=0}^{h_k-1}\Gamma_p\left(\left\langle\frac{p^i h}{h_k
}+\frac{p^ia}{q-1}\right\rangle \right)}{\prod\limits_{h=1}^{h_k-1}\Gamma_p\left(\left\langle \frac{hp^i}{h_k}\right\rangle\right)}.
\end{align*}
Substituting these values in \eqref{eqn-15} yields
\begin{align*}
A&=(-1)^{n+1}\sum_{a=0}^{q-2}\overline{\omega}^{ad}(-\lambda d) \ \overline{\omega}( d^{-ad}) \ \overline{\omega}^{a}(h_1^{h_1}\cdots h_n^{h_n})(-p)^{\displaystyle{\sum_{i=0}^{r-1}}M_{a,i}}\\
&\hspace*{.5cm}\times\prod_{i=0}^{r-1}\prod\limits_{h=0}^{d-1}\frac{\Gamma_p\left(\left\langle\frac{p^i(1+h)}{d}-\frac{p^ia}{q-1}\right\rangle \right)}{\Gamma_p\left(\left\langle \frac{hp^i}{d}\right\rangle\right)}\prod_{k=1}^{n}
\left(\prod\limits_{h=0}^{h_k-1}\frac{\Gamma_p\left(\left\langle\frac{p^i h}{h_k
}+\frac{p^ia}{q-1}\right\rangle \right)}{ \Gamma_p\left(\left\langle \frac{hp^i}{h_k}\right\rangle\right)}\right),
\end{align*}
where $M_{a,i}=\displaystyle{\sum_{k=1}^{n}}\left\langle\frac{h_kap^i}{q-1}\right\rangle+\left\langle\frac{-dap^i}{q-1}\right\rangle=-\displaystyle{\sum_{k=1}^{n}}\left\lfloor\frac{h_kap^i}{q-1}\right\rfloor-\left\lfloor\frac{-dap^i}{q-1}\right\rfloor$.
We now have
\begin{align*}
A&=(-1)^{n+1} +(-1)^{n+1}\sum_{a=1}^{q-2}\overline{\omega}^{a}((-1)^d\lambda^d h_1^{h_1}\cdots h_n^{h_n})(-p)^{\sum_{i=0}^{r-1}M_{a,i}}\nonumber\\
&\hspace*{.5cm}\times\prod_{i=0}^{r-1}\Gamma_p\left(\left\langle p^i-\frac{p^ia}{q-1}\right\rangle \right)\Gamma_p\left(\left\langle\frac{p^ia}{q-1}\right\rangle \right)\Gamma_p^{n-1}\left(\left\langle\frac{p^ia}{q-1}\right\rangle \right)\nonumber\\
&\hspace*{.5cm}\times\prod\limits_{h=1}^{d-1}\frac{\Gamma_p\left(\left\langle\frac{p^ih}{d}-\frac{p^ia}{q-1}\right\rangle \right)}{\Gamma_p\left(\left\langle \frac{hp^i}{d}\right\rangle\right)}\prod\limits_{\begin{subarray}{1} \ k=1 \\ h_k>1 \end{subarray}}^{n}
\prod\limits_{h=1}^{h_k-1}\frac{\Gamma_p\left(\left\langle\frac{p^i h}{h_k
}+\frac{p^ia}{q-1}\right\rangle \right)}{\Gamma_p\left(\left\langle \frac{hp^i}{h_k}\right\rangle\right)}.
\end{align*}
Employing Lemma \ref{lemma-3.4} and Lemma \ref{lemma-3.3}, we obtain
\begin{align*}
A&=(-1)^{n+1} +(-1)^{n+1}q\sum_{a=1}^{q-2}\overline{\omega}^{a}((-1)^{d-1}\lambda^d h_1^{h_1}\cdots h_n^{h_n})(-p)^{\sum_{i=0}^{r-1}N_{a,i}}\\
&\hspace*{.5cm}\times\prod_{i=0}^{r-1}\Gamma_p^{n-1}\left(\left\langle\frac{p^ia}{q-1}\right\rangle \right)
\prod\limits_{h=1}^{d-1}\frac{\Gamma_p\left(\left\langle\frac{p^ih}{d}-\frac{p^ia}{q-1}\right\rangle \right)}{\Gamma_p\left(\left\langle \frac{hp^i}{d}\right\rangle\right)}\prod\limits_{\begin{subarray}{1} \ k=1 \\ h_k>1 \end{subarray}}^{n}
\prod\limits_{h=1}^{h_k-1}\frac{\Gamma_p\left(\left\langle\frac{p^i h}{h_k
}+\frac{p^ia}{q-1}\right\rangle \right)}{\Gamma_p\left(\left\langle \frac{hp^i}{h_k}\right\rangle\right)},
\end{align*}
where $N_{a,i}=-\sum\limits_{h=1}^{d-1} \left\lfloor\left\langle\frac{hp^i}{d}\right\rangle -\frac{ap^i}{q-1}\right\rfloor- \sum\limits_{k=1}^{n} \sum\limits_{h=0}^{h_k-1} 
\left\lfloor\left\langle\frac{-hp^i}{h_k}\right\rangle+\frac{ap^i}{q-1}\right\rfloor+\left\lfloor \frac{ap^i}{q-1}\right\rfloor.$
For $0\leq a \leq q-2$, it is observed that 
 \begin{align}\label{eq-16}
 \prod\limits_{h=1}^{h_k-1}\frac{\Gamma_p\left(\left\langle\frac{p^i h}{h_k
}+\frac{p^ia}{q-1}\right\rangle \right)}{\Gamma_p\left(\left\langle \frac{hp^i}{h_k}\right\rangle\right)}=\prod\limits_{h=1}^{h_k-1}\frac{\Gamma_p\left(\left\langle\frac{-p^i h}{h_k
}+\frac{p^ia}{q-1}\right\rangle \right)}{\Gamma_p\left(\left\langle \frac{-hp^i}{h_k}\right\rangle\right)}.
 \end{align}
Adding and subtracting the term under summation for $a=0$, and then using \eqref{eq-16} yield
\begin{align*}
	A&=(-1)^{n+1}(1-q) +(-1)^{n+1}q\sum_{a=0}^{q-2}\overline{\omega}^{a}((-1)^{d-1}\lambda^d h_1^{h_1}\cdots h_n^{h_n})(-p)^{\sum_{i=0}^{r-1}N_{a,i}}\nonumber\\
	&\hspace*{.1cm}\times\prod_{i=0}^{r-1}\Gamma_p^{n-1}\left(\left\langle\frac{p^ia}{q-1}\right\rangle \right)\prod\limits_{h=1}^{d-1}\frac{\Gamma_p
	\left(\left\langle\frac{p^ih}{d}-\frac{p^ia}{q-1}\right\rangle \right)}{\Gamma_p\left(\left\langle \frac{hp^i}{d}\right\rangle\right)}\prod\limits_{\begin{subarray}{1} \ k=1 \\ h_k>1 \end{subarray}}^{n}\prod\limits_{h=1}^{h_k-1}\frac{\Gamma_p\left(\left\langle\frac{-p^i h}{h_k
		}+\frac{p^ia}{q-1}\right\rangle \right)}{\Gamma_p\left(\left\langle \frac{-hp^i}{h_k}\right\rangle\right)}.\nonumber\\
	\end{align*}
Since $\overline{\omega}^a(-1)=(-1)^a$, we obtain 
\begin{align*}
A=(-1)^{n+1}\left(1-q-q(q-1)\cdot  {_{d-1}}G_{d-1}\left[\begin{array}{cccc}
	\frac{1}{d}, & \hspace{-.2cm}\frac{2}{d}, &\hspace{-.2cm} \ldots, & \hspace{-.2cm}\frac{d-1}{d} \\
	b_1, & \hspace{-.2cm} b_2, &\hspace{-.2cm} \ldots, & \hspace{-.2cm}b_{d-1}
	\end{array}|\lambda^d h_1^{h_1}\cdots h_n^{h_n}
	\right]_q\right),
\end{align*}
where $b_i$'s are as given in Notation \ref{notation}.
Substituting the expressions for $A$ and $B$ in \eqref{eq-17}, and then using the relation given by \eqref{eq-12.1}, we complete the proof of the theorem.
 \end{proof}
\begin{proof}[Proof of Theorem \ref{MT-2}]
	Let $N_q^d(\lambda)$ denote the number of points on the diagonal hypersurface $D_\lambda^{d,k}$ in $\mathbb{A}^{2}(\mathbb{F}_q)$. Then we have
	\begin{align}\label{eq-18}
	\# D_\lambda^{d,k} (\mathbb{F}_q)=\frac{N_q^d(\lambda) -1}{q-1}. 
	\end{align}
	Let $f(x_1,x_2)=x_1^d+x_2^d-d\lambda x_1^{k} x_2^{d-k}$ and using the identity 
	\begin{align*}
	\sum_{z\in \mathbb{F}_q} \theta(zf(x_1,x_2))= \left\{
	\begin{array}{ll}
	q, & \hbox{if $f(x_1,x_2)=0$;} \\
	0, & \hbox{if $f(x_1,x_2)\neq 0$,}
	\end{array}
	\right.
	\end{align*}
	we obtain
	\begin{align}\label{eq-19}
	q\cdot N_q^d(\lambda)&= \sum_{z\in\mathbb{F}_q}\sum_{x_i \in\mathbb{F}_q}\theta(zf(x_1,x_2))\nonumber\\
	&=q^2 + \sum_{z,x_1,x_2 \in\mathbb{F}_q^{\times}}\theta(zf(x_1,x_2)) +\sum_{z\in\mathbb{F}_q^{\times}}\sum_{\begin{subarray}{1} \ \text{some } \\ \  x_i =0\end{subarray}} \theta(zf(x_1,x_2))\nonumber\\
	&=q^2+A_1+B_1,
	\end{align}
where $A_1:=\displaystyle \sum_{z,x_i \in\mathbb{F}_q^{\times}}\theta(zf(x_1,x_2))$ and $B_1:=\displaystyle \sum_{z\in\mathbb{F}_q^{\times}}\sum_{\begin{subarray}{1} \ \text{some } \\ \  x_i =0\end{subarray}} \theta(zf(x_1,x_2))$. Firstly, we calculate $B_1$. We have
	\begin{align}\label{eq-20}
	B_1&=\sum_{z\in\mathbb{F}_q^{\times}}\sum_{\begin{subarray}{1} \ \text{some } \\ \  x_i =0\end{subarray}} \theta(zf(x_1,x_2))\nonumber\\
	&=q-1+\sum_{z\in\mathbb{F}_q^{\times}}\sum_{x_1\in\mathbb{F}_q^{\times}} \theta(zx_1^d)+\sum_{z\in\mathbb{F}_q^{\times}}\sum_{x_2\in\mathbb{F}_q^{\times}} \theta(zx_2^d).
	\end{align}
	Using Lemma \ref{lemma2_2}, we rewrite \eqref{eq-20} as
	\begin{align*}
	B_1&=q-1+\frac{2}{q-1}\sum_{z\in\mathbb{F}_q^\times}\sum_{x\in\mathbb{F}_q^{\times}}\sum\limits_{a=0}^{q-2} g(T^{-a})T^a(zx^d)\\
	&=q-1+\frac{2}{q-1}\sum\limits_{a=0}^{q-2} g(T^{-a})\sum_{z\in\mathbb{F}_q^\times}T^a(z)\sum_{x\in\mathbb{F}_q^{\times}}T^{ad}(x)\\
	&=-(q-1).
	\end{align*}
	We obtain the last equality by using the fact that inner sum is nonzero only if $a=0$ and putting $g(\varepsilon)=-1$.  Next, we simplify the expression for $A_1$. We have
	\begin{align*}
		A_1&=\sum_{z,x_1,x_2 \in\mathbb{F}_q^{\times}}\theta(zx_1^d+zx_2^d-d\lambda zx_1^kx_2^{d-k})\\
		&=\sum_{z,x_1,x_2 \in\mathbb{F}_q^{\times}}\theta(zx_1^d)\theta(zx_2^d)\theta(-d\lambda zx_1^kx_2^{d-k}).
	\end{align*}
		Applying Lemma \ref{lemma2_2}, we obtain
		\begin{align*}
			A_1&=\frac{1}{(q-1)^3}\sum_{l,m,n=0}^{q-2}g(T^{-l})g(T^{-m})g(T^{-n})\times\\
		&\hspace{1cm}\sum_{z,x_1,x_2 \in\mathbb{F}_q^{\times}}T^{l}(zx_1^d)T^{m}(zx_2^d)T^n(-d\lambda zx_1^kx_2^{d-k})\\
		&=\frac{1}{(q-1)^3}\sum_{l,m,n=0}^{q-2}g(T^{-l})g(T^{-m})g(T^{-n})T^{n}(-d\lambda)\times\\
		&\hspace{1cm}\sum_{z \in\mathbb{F}_q^{\times}}T^{l+m+n}(z)\sum_{x_1 \in\mathbb{F}_q^{\times}}T^{dl+kn}(x_1)\sum_{x_2 \in\mathbb{F}_q^{\times}}T^{dm+(d-k)n}(x_2 
		).
		\end{align*}
	The final sum is nonzero only when $n=-m-l$ by \eqref{eq-1}. Making this substitution we deduce that 
	\begin{align}\label{eq-21}
	A_1&=\frac{1}{(q-1)^2}\sum_{l,m=0}^{q-2}g(T^{-l})g(T^{-m})g(T^{l+m})T^{-l-m}(-d\lambda)\times\nonumber\\
		&\hspace{1cm}\sum_{x_1 \in\mathbb{F}_q^{\times}}T^{(d-k)l-km}(x_1)\sum_{x_2 \in\mathbb{F}_q^{\times}}T^{km-(d-k)l}(x_2).
\end{align}	 
	Inner sums in \eqref{eq-21} are nonzero only when 
	\begin{align}\label{cong}
	km\equiv (d-k)l\pmod {q-1}.
	\end{align}
	We now solve  \eqref{cong} for $l$ and $m$ and find all its possible solutions. Consider $\gcd(q-1,k)=k_0$ and $\gcd(q-1,d-k)=d_0$. Take $k_1=\frac{k}{k_0}$ and $d_1=\frac{d-k}{d_0}$.  We have $\gcd(k,d-k)=1$ as $\gcd(d,k,d-k)=1$. 
	Since $km\equiv (d-k)l\pmod {q-1}$, hence $k_0\mid l$ which implies $l=\alpha k_0$ for $0\leq \alpha\leq \frac{q-1}{k_0}-1$. Also, $\gcd(\frac{q-1}{k_0},k_1)=1$, and hence we obtain $l=k\alpha$ where $0\leq\alpha\leq\frac{q-1}{k_0}-1$. 
	Corresponding to each $l$ depending on $\alpha$, we have $k_0$ values of $m$ given by $m=(d-k)\alpha+r_1\frac{q-1}{k_0}$ where $r_1=0,1,\ldots,k_0-1$. Therefore, the total number of solutions for \eqref{cong} is $q-1$.\\
    $\emph{Claim:}$ All the solutions are given by $l\equiv ka\pmod{q-1}$ and $m\equiv(d-k)a\pmod{q-1}$ for $0\leq a \leq q-2$.\\
    \emph{Proof of the Claim}. Since $\gcd(k_0,d-k)=1$, there exists a $z_1$ such that $(d-k)z_1\equiv1\pmod {k_0}$. For any $\alpha$ and $0\leq r\leq k_0-1$, one can easily check that $l=k\alpha\equiv ka\pmod{q-1}$ and $m=(d-k)\alpha+r\frac{q-1}{k_0}\equiv(d-k)a\pmod{q-1}$ for $a=\alpha+\frac{q-1}{k_0} rz_1$.
    We now need to check if any solution is repeating or not. Suppose $ka\equiv kb\pmod{q-1}$ for some $a$ and $b$. Then $k_1a=k_1 b+\frac{q-1}{k_0} x_0$ for some $x_0$. Also, suppose that $(d-k)a\equiv(d-k)b\pmod{q-1}$. Then 
    \begin{align}\label{eq-22}
    	d_0d_1a=d_0d_1b+(q-1)y_0,
    \end{align}
    for some $y_0$. We now have $k_1d_0d_1a=k_1d_0d_1b+k_1(q-1)y_0$. Substituting the value of $k_1 a$ from above, we get $d_0d_1x_0=y_0k_0k_1$ and hence $x_0=kz$ and $y_0=(d-k)z$ for some $z$. 
    Putting the value of $y_0$ in \eqref{eq-22}, we get $(d-k)a-(d-k)b=(q-1)(d-k)z$ which gives $a\equiv b\pmod{q-1}$.
    Therefore, the solutions are given by $l\equiv ka\pmod{q-1}$ and $m\equiv(d-k)a\pmod{q-1}$ for $a=0,1,\ldots,q-2$. This completes the proof of our claim.
\par 
 Hence, the inner sums in \eqref{eq-21} are nonzero only if $l\equiv ka\pmod{q-1}$ and $m\equiv(d-k)a\pmod{q-1}$, where $a=0,1,\ldots,q-2$. Substituting these values of $l$ and $m$ in \eqref{eq-21} yields
	\begin{align*}
	A_1&=\sum_{a=0}^{q-2}g(T^{-ka})g(T^{-(d-k)a})g(T^{da})T^{-da}(-d\lambda).
	\end{align*}
Considering $n=2, \ h_1=k,$ and $h_2=d-k$ in \eqref{eq-16.1}, and proceeding along similar lines as shown in the proof of Theorem \ref{MT-1}, we deduce that 
	\begin{align*}
	A_1=q-1+q(q-1) \cdot {_{d-1}}G_{d-1}\left[\begin{array}{cccc}
		\frac{1}{d}, &\hspace{-.2cm} \frac{2}{d}, &\hspace{-.2cm} \ldots, &\hspace{-.2cm} \frac{d-1}{d}\vspace{.1cm} \\
		b_1, &\hspace{-.2cm} b_2, &\hspace{-.2cm}  \ldots, &\hspace{-.2cm} b_{d-1}
	\end{array}|\lambda^d k^{k} (d-k)^{d-k}
	\right]_q,
	\end{align*}
	where $b_i$'s are as given in Notation \ref{notation} with $h_1=k$ and $h_2=d-k$. Substituting the values of $A_1$ and $B_1$ in \eqref{eq-19} and then using the relation \eqref{eq-18}, we complete the proof of the theorem.
\end{proof}

\end{document}